\documentclass[a4paper, 12pt]{article}       

\usepackage{amsmath}
\usepackage{amsfonts}
\usepackage{amssymb}
\usepackage{amsthm}
\usepackage[dvipsnames]{xcolor}
\usepackage{graphicx}
\usepackage{tikz}
\usetikzlibrary{external}
\tikzexternalize
\usetikzlibrary{snakes, arrows}

\usepackage{subfigure}
\usepackage{pgfplots}
\usepackage[sort&compress,numbers]{natbib}
\usepackage{authblk}
\usepackage{enumerate}
\usepackage{caption}
\usepackage{algorithm2e}
\usepackage{algorithmic}
%
\definecolor{dbrown}{RGB}{102,51,0}
\definecolor{darkblue}{RGB}{80, 100, 150}

\newtheorem{theorem}{Theorem}
\numberwithin{theorem}{section}

\newtheorem{definition}[theorem]{Definition}
\newtheorem{example}[theorem]{Example}

\newtheorem{lemma}[theorem]{Lemma}

\newtheorem{problem}{Problem}

\usepackage{etex}
\usepackage{authblk}
\usepackage{caption}

\definecolor{dbrown}{RGB}{102,51,0}
\definecolor{darkblue}{RGB}{80, 100, 150}

\newcommand{\ie}{{\it i.e.},\ }

\newcommand{\FF}{\mathcal{F}}


\newcommand{\parti}{C}
\newcommand{\parind}{c}
\newcommand{\finepar}{Z}
\newcommand{\fineparind}{z}


\title{Bounds on the bias terms for the Markov reward approach
}


\author[1]{Xinwei Bai}
\author[1]{Jasper Goseling}


\affil[1]{
	Department of Applied Mathematics, University of Twente, P.O. Box 217, 7500 AE Enschede, the Netherlands
    
    Email: \texttt{x.bai@utwente.nl; j.goseling@utwente.nl}}
\date{}

\begin{document}

\maketitle

\begin{abstract}

An important step in the Markov reward approach to error bounds on stationary performance measures of Markov chains is to bound the bias terms. Affine functions have been successfully used for these bounds for various models, but there are also models for which it has not been possible to establish such bounds. So far, no theoretical results have been available that guarantee bounds on the bias terms.

We consider random walks in the positive orthant and provide sufficient conditions under which quadratic and/or geometric functions can be used to bound the bias terms. In addition, we provide a linear programming framework that establishes the quadratic bounds as well as the resulting bound on the stationary performance. \\
%
\textbf{Keywords:} Markov reward approach, bias terms, quadratic bounds, geometric bounds, linear programming
\end{abstract}

\section{Introduction}
\label{sec:quadratic_bound_introduction}

This paper deals with the Markov reward approach for error bounds~\cite{vandijk11inbook}.
The aim of this approach is to provide bounds on the stationary performance of a Markov chain $R$ for which the stationary probability distribution $\pi$ is unknown. These bounds are established through a perturbed random walk $\bar R$ with known stationary probability distribution $\bar\pi$. This gist of the approach, starting from a stationary performance measure $\FF = \sum_n \pi(n)F(n)$ for some non-negative $F(n)$, is to interpret $F(n)$ as the one-step reward for being in state $n$ and to consider the expected cumulative reward up to time $t$ if $R$ starts from $n$ at time $0$, denoted by $F^t(n)$. The basic result, see, for instance,~\cite{vandijk11inbook} is that if we can find functions $\bar{F}$ and $G$ that satisfy
\begin{align}
\left| \bar{F}(n) - F(n) + \sum_{n'} \left( \bar P(n,n') - P(n,n') \right)\left( F^t(n') - F^t(n) \right) \right| \le G(n),
\end{align}
for all $n$ and all $t \ge 0$, then 
\begin{align}
\left| \bar{\FF}-\FF \right| \le \sum_n\bar{\pi}(n)G(n). 
\end{align}
In the above, $P(n,n')$ and $\bar P(n,n')$ denote the transition probability from $n$ to $n'$ in $R$ and $\bar R$, respectively. Also, $\bar{\FF}=\sum_n \bar\pi(n)\bar{F}(n)$.

Terms of the form $F^t(n') - F^t(n)$ are called {\em bias terms} and an essential step in application of the above result is to bound the bias terms uniformly in $t$. In most of the existing literature, for instance,~\cite{boucherie2009monotonicity, vandijk1988perturbation, vandijk1998bounds, vandijk11inbook, vandijk88tandem, van2004error, van1989simple}, such bounds are essentially established through trial and error with a verification provided through induction in $t$. The difficulty in this is that the verification is tedious and often requires quite some insight into the behavior of the Markov chain at hand. 

In~\cite{goseling2016linear} a general framework has been introduced for establishing error bounds for a specific class of Markov chains, more specifically, for random walks in the quarter plane. This framework alleviates the need to manually establish bounds on the bias terms. In particular, a general linear program is presented in which the values of the transition probabilities and the function $F(n)$ 
enter as simple parameters. The advantage of this method is that can provide bounds for any Markov chain that is a random walk in the quarter plane without the need to manually establish bounds on the bias terms.

The common aspect in both the manual methods of~\cite{boucherie2009monotonicity, vandijk1988perturbation, vandijk1998bounds, vandijk11inbook, vandijk88tandem, van2004error, van1989simple} and the linear programming approach of~\cite{goseling2016linear} is that the bias terms are bounded using affine functions. The discussion in~\cite{vandijk11inbook} as well as the numerical results in~\cite{goseling2016linear} indicate that it might not always be possible to establish such bounds. More precisely,~\cite{goseling2016linear} contains examples for which this has not been successful. It is, however, not clear if this is due to the approach that is taken, or if this is an inherent limitation imposed by bounding with affine functions. More generally, not much is known about the behavior of the bias terms. In~\cite{vandijk11inbook}, it is shown that the bias terms are bounded by the mean first passage time between two states. This establishes existence of bounds on the bias terms, but does not give an indication on the type of bound that one can hope to establish, since closed-form expressions for mean first passage time are by themselves difficult to obtain. 

In this paper two classes of functions are used to bound the bias terms. In particular, we consider quadratic and geometric functions. Sufficient conditions under which the bias terms can be bounded by these functions are found. Also, the linear programming framework of~\cite{goseling2016linear} is extended to work with quadratic bounding function. More precisely, the {\em contributions} of this paper are: 
\begin{enumerate}
\item We present a bounding function on the bias terms that is geometrically increasing in the coordinate of the state if $R$ has negative drift. We give an explicit expression for the bounding function. 	
\item We show that if $R$ has negative drift, there exists a bounding function that is quadratically increasing in the coordinate of the state. The explicit expression for the quadratic bound is difficult to obtain. Hence, we have formulated a linear program to obtain bounds on $\FF$ based on quadratic bounds on the bias terms. 
\item We compare numerical results obtained by considering various bounds on bias terms. We see that the geometric bounds are often not tight. By considering quadratic and linear bounds on the bias terms, we obtain relatively tight bounds on $\FF$. Moreover, by considering quadratic bounds we can obtain error bounds in cases where error bounds are not available considering linear bounds. We can also get tighter bounds by considering quadratic bounds. 
\end{enumerate}

The class of Markov chains that we study in this paper is as follows. We consider a discrete-time random walk $R$ in the $M$-dimensional positive orthant, i.e., on state space $S = \left\{ 0,1,\dots \right\}^M$. The state space is partitioned into a finite number of components such that the transition probabilities are homogeneous with each component. 
As demonstrated in~\cite{bai2018linear} this enables us to model, for instance, queueing networks with break-downs, overflows and finite buffers. In~\cite{bai2018linear} the linear programming framework of~\cite{goseling2016linear} has been generalized to this class of models. Note, that in order to apply the Markov reward approach one requires a perturbed random walk $\bar R$ with a known stationary probability distribution $\bar \pi$. In this paper, our focus is not on constructing $\bar R$ or $\bar \pi$. Instead, we use the results from~\cite{bai2018linear} (see also,~\cite{bayer2002structure, chen2015invariant, chen2016invariant}) to apply the Markov reward approach for specific examples.





The remainder of the paper is structured as follows. In Section~\ref{sec:model_description} we define the model and notation considered in this paper. Then, in Section~\ref{sec:preliminary} we review the results on the Markov reward approach, geometric ergodicity and $\mu$-ergodicity. In Section~\ref{sec:quadratic_bound_bias_term_bound} we use these results to find geometric and quadratic bounding functions on the bias terms. Next, in Section~\ref{sec:quadratic_bound_linear_program_b0}, we formulate a linear program for obtaining the bounds based on quadratic bounds on the bias terms. Finally, in Section~\ref{sec:quadratic_bound_numerical_experiment}, we implement the linear program in numerical examples, where we consider various performance measures for the upper and lower bounds.

\section{Model description}
\label{sec:model_description}

Let $R$ be a discrete-time random walk in $S = \left\{ 0,1,\dots \right\}^M$. 
Moreover, let $P: S \times S \to [0,1]$ be the transition matrix of $R$. In this paper only transitions between the nearest neighbors are allowed, \ie $P(n,n+u) > 0$ only if $u\in N(n)$, where $N(n)$ denotes the set of possible transitions from $n$, \ie
\begin{align}
N(n) = \left\{ u \in \{-1,0,1\}^M \mid n+u \in S \right\}. 
\end{align}
For a finite index set $K$, we define a partition of $S$ as follows.

\begin{definition}
\label{def:partition}
$C = \left\{ C_k \right\}_{k\in K}$ is called a partition of $S$ if
\begin{enumerate}
\item $S = \cup_{k\in K} C_k$.
\item For all $j,k\in K$ and $j \neq k$, $C_j \cap C_k = \emptyset$. 
\item For any $k\in K$, $N(n) = N(n^\prime)$, $ \forall n,n^\prime \in C_k$.
\end{enumerate}
\end{definition}

The third condition, which is non-standard for a partition, ensures that all the states in a component have the same set of possible transitions. With this condition, we are able to define homogeneous transition probabilities within a component, meaning that the transition probabilities are the same everywhere in a component. Denote by $c(n)$ the index of the component of partition $C$ that $n$ is located in. We call $c:S\to K$ the index indicating function of partition $\parti$.

In this paper, we restrict our attention to an $R$ that is homogeneous with respect to a partition $C$ of the state space, \ie $P(n,n+u)$ depends on $n$ only through the component index $c(n)$. Therefore, we denote by $N_{c(n)}$ and $p_{c(n),u}$ the set of possible transitions from $n$ and transition probability $P(n,n+u)$, respectively. To illustrate the notation, we present the following example. 

\begin{example}
\label{ex:introduction_coarse_partition}
Consider $S = \{ 0,1,\dots \}^2$. Let $C$ consist of
\begin{align*}
C_1 &= \left\{ 0 \right\} \times \left\{ 0 \right\}, \quad C_2 = \left\{ 1,2,3,4 \right\} \times \left\{ 0 \right\}, \quad C_3 = \left\{ 5,6,\dots \right\} \times \left\{ 0 \right\}, \\
C_4 &= \left\{ 0 \right\} \times \left\{ 1,2,\dots \right\}, C_5 = \left\{ 1,2,3,4 \right\} \times \left\{ 1,2,\dots \right\}, \\
C_6 &= \left\{ 5,6,\dots \right\} \times \left\{ 1,2,\dots \right\}. 
\end{align*}
The components and their sets of possible transitions are shown in Figure~\ref{fig:introduction_coarse_partition}. 

\begin{figure}[htb!]
\centering
\begin{tikzpicture}[scale = 1]
\foreach \i in {1,...,7}
\foreach \j in {1,...,5}
\filldraw [gray] (\i, \j) circle (2pt);

\foreach \i in {1,...,7}
\filldraw [gray] (\i,0) circle (2pt);

\foreach \j in {1,...,5}
\filldraw [gray] (0, \j) circle (2pt);

\filldraw [gray] (0, 0) circle (2pt);

\draw [->, >=stealth', gray] (0,0) -- (0,6) node[left, black, thick] {$n_2$};
\draw [->, >=stealth', gray] (0,0) -- (8,0) node[below, black, thick] {$n_1$};

\draw [rounded corners] (-0.4,-0.4) rectangle (0.4,0.4) node at (0.2,-0.2) {$C_1$};

\draw [rounded corners] (0.6,-0.4) rectangle (4.4,0.4) node at (2.5,-0.2) {$C_2$};

\draw [rounded corners] (7.4,-0.4) -- (4.6,-0.4) -- (4.6,0.4) -- (7.4,0.4) node at (5.5,-0.2) {$C_3$};

\draw [rounded corners] (-0.4,5.4) -- (-0.4,0.6) -- (0.4,0.6) -- (0.4,5.4) node at (-0.2,2.5) {$C_4$};

\draw [rounded corners] (0.6,5.4) -- (0.6,0.6) -- (4.4,0.6) -- (4.4,5.4) node at (3,2.5) {$C_5$};

\draw [rounded corners] (7.4,0.6) -- (4.6,0.6) -- (4.6,5.4) node at (7,2.5) {$C_6$};

\draw [->, very thick] (0,0) -- ++(0.8,0);
\draw [->, very thick] (0,0) -- ++(0,0.8);
\draw [->, very thick] (0,0) -- ++(0.8,0.8);
\draw node at (-0.4,0.3) {$p_{1,u}$};
		
\draw [->, very thick] (2,0) -- ++(0.8,0);
\draw [->, very thick] (2,0) -- ++(-0.8,0);
\draw [->, very thick] (2,0) -- ++(0,0.8);
\draw [->, very thick] (2,0) -- ++(0.8,0.8);
\draw [->, very thick] (2,0) -- ++(-0.8,0.8);
\draw node at (3,0.3) {$p_{2,u}$};

\draw [->, very thick] (6,0) -- ++(0.8,0);
\draw [->, very thick] (6,0) -- ++(-0.8,0);
\draw [->, very thick] (6,0) -- ++(0,0.8);
\draw [->, very thick] (6,0) -- ++(0.8,0.8);
\draw [->, very thick] (6,0) -- ++(-0.8,0.8);
\draw node at (7,0.3) {$p_{3,u}$};

\draw [->, very thick] (0,3) -- ++(0.8,0);
\draw [->, very thick] (0,3) -- ++(0,0.8);
\draw [->, very thick] (0,3) -- ++(0,-0.8);
\draw [->, very thick] (0,3) -- ++(0.8,0.8);
\draw [->, very thick] (0,3) -- ++(0.8,-0.8);
\draw node at (-0.5,3) {$p_{4,u}$};

\draw [->, very thick] (2,3) -- ++(0.8,0);
\draw [->, very thick] (2,3) -- ++(-0.8,0);
\draw [->, very thick] (2,3) -- ++(0,0.8);
\draw [->, very thick] (2,3) -- ++(0,-0.8);
\draw [->, very thick] (2,3) -- ++(0.8,0.8);
\draw [->, very thick] (2,3) -- ++(0.8,-0.8);
\draw [->, very thick] (2,3) -- ++(-0.8,0.8);
\draw [->, very thick] (2,3) -- ++(-0.8,-0.8);
\draw node at (2.5,4) {$p_{5,u}$};

\draw [->, very thick] (6,3) -- ++(0.8,0);
\draw [->, very thick] (6,3) -- ++(-0.8,0);
\draw [->, very thick] (6,3) -- ++(0,0.8);
\draw [->, very thick] (6,3) -- ++(0,-0.8);
\draw [->, very thick] (6,3) -- ++(0.8,0.8);
\draw [->, very thick] (6,3) -- ++(0.8,-0.8);
\draw [->, very thick] (6,3) -- ++(-0.8,0.8);
\draw [->, very thick] (6,3) -- ++(-0.8,-0.8);
\draw node at (6.5,4) {$p_{6,u}$};

%
%
\end{tikzpicture}
\caption{A finite partition of $S=\left\{ 0,1,\dots \right\}^2$ and the sets of possible transitions for its components.}
\label{fig:introduction_coarse_partition}
\end{figure}
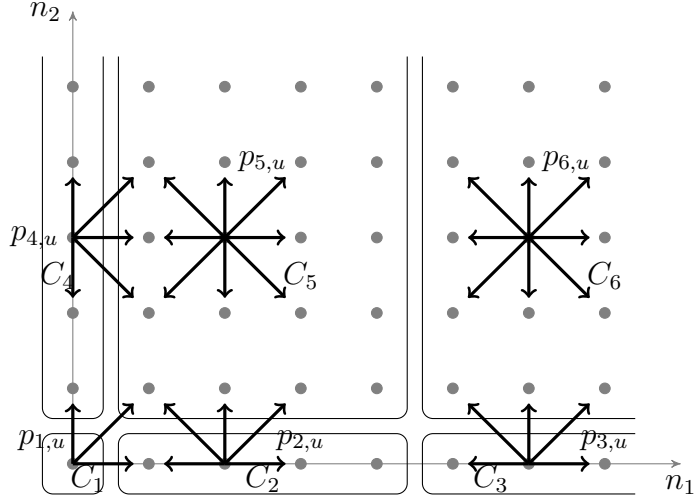
\end{example}

Based on a partition, we now define a component-wise linear function.

\begin{definition}
\label{def:c_linear_function}
Let $C$ be a partition of $S$. A function $H: S \to [0,\infty)$ is called $C$-linear if 
\begin{align}
H(n) = \sum_{k\in K} \mathbf{1}\left( n\in C_k \right)\left( h_{k,0} + \sum_{i=1}^M h_{k,i}n_i \right).
\end{align}
\end{definition}
In this paper, we consider an $F(n)$ that is $C$-linear.

\section{Preliminaries}
\label{sec:preliminary}

\subsection{The Markov reward approach}
\label{ssec:Markov_reward_approach}

Suppose that we have obtained an $\bar{R}$ for which $\bar{\pi}$ is known explicitly. The Markov reward approach can be used to obtain upper and lower bounds on $\FF$ in terms of $\bar{\FF}$. An introduction to the approach is given in~\cite{vandijk11inbook}. In this section, we give a review on this approach and define the bias terms. 

In the Markov reward approach, $F(n)$ is considered as a reward if $R$ stays in $n$ for one time step. Let $F^t(n)$ be the expected cumulative reward up to time $t$ if $R$ starts from $n$ at time $0$,
\begin{align}
\label{eq:definition_cumulative_reward}
F^t(n) = \sum_{k=0}^{t-1}\sum_{m\in S} P^k(n,m)F(m),
\end{align}
where $P^k(n,m)$ is the $k$-step transition probability from $n$ to $m$. Then, since $R$ is ergodic and $\FF$ exists, for any $n\in S$, 
\begin{align}
\lim_{t\to \infty} \frac{F^t(n)}{t} = \FF,
\end{align}
\ie $\FF$ is the average reward gained by the random walk independent of the starting state. Moreover, based on the definition of $F^t$, it can be verified that the following recursive equation holds, 
\begin{align}
\nonumber
F^0(n) =&\ 0, \\
\label{eq:cumulative_reward}
F^{t+1}(n) =&\ F(n) + \sum_{u\in N_{c(n)}} p_{c(n),u}F^t(n+u).
\end{align}
For any $n\in S$, $u\in N_{c(n)}$ and $t=0,1,\dots$, the bias terms are defined as
\begin{align}
D^t_u(n) = F^t(n+u) - F^t(n).
\end{align} 

We present the main result of the Markov reward approach below. 

\begin{theorem}[Result 9.3.5 in~\cite{vandijk11inbook}]
\label{thm:Markov_reward_approach_result}
Suppose that $\bar{F}: S \to [0,\infty)$ and $G: S \to [0,\infty)$ satisfy
\begin{align}
\label{eq:error_bound_condition}
\left| \bar{F}(n) - F(n) + \sum_{u\in N_{c(n)}} \left( \bar{p}_{c(n),u}-p_{c(n),u} \right) D^t_u(n) \right| \le G(n),
\end{align}
for all $n\in S$, $t \ge 0$. Then 
\begin{align*}
\left| \bar{\FF}-\FF \right| \le \sum_{n\in S}\bar{\pi}(n)G(n). 
\end{align*}
\end{theorem}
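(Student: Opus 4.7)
The plan is to compare the cumulative reward functions $F^t$ (for $R$ with reward $F$) and an analogously defined $\bar{F}^t$ (for $\bar{R}$ with reward $\bar{F}$), show that their per-step difference is controlled by the left-hand side of~\eqref{eq:error_bound_condition}, and then pass to the long-run average to recover $\bar{\FF}-\FF$.

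The first step is to introduce
\begin{align*}
\bar{F}^t(n) = \sum_{k=0}^{t-1}\sum_{m\in S}\bar{P}^k(n,m)\bar{F}(m), \qquad \Delta^t(n) = \bar{F}^t(n) - F^t(n),
\end{align*}
and observe that $\bar{F}^t$ satisfies the recursion analogous to~\eqref{eq:cumulative_reward} with $\bar{p}$ and $\bar{F}$ in place of $p$ and $F$, while by ergodicity $F^t(n)/t \to \FF$ and $\bar{F}^t(n)/t \to \bar{\FF}$ for every $n$. Subtracting the two recursions and, in the $F^{t+1}(n)$ part, adding and subtracting $\sum_u \bar{p}_{c(n),u} F^t(n+u)$, one gets, using that both $\bar{p}_{c(n),\cdot}$ and $p_{c(n),\cdot}$ sum to $1$ so that the $F^t(n)$ contributions vanish,
\begin{align*}
\Delta^{t+1}(n) = \left[\bar{F}(n) - F(n) + \sum_{u\in N_{c(n)}}\bigl(\bar{p}_{c(n),u}-p_{c(n),u}\bigr)D^t_u(n)\right] + \sum_{u\in N_{c(n)}} \bar{p}_{c(n),u}\,\Delta^t(n+u).
\end{align*}
This is the crucial identity: the bracketed term is precisely what~\eqref{eq:error_bound_condition} bounds by $G(n)$, and the remaining term is $\bar{R}$ acting linearly on $\Delta^t$.

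The second step is to unroll this recursion. Since $\Delta^0 \equiv 0$, induction on $t$ yields
\begin{align*}
\Delta^t(n) = \sum_{k=0}^{t-1}\sum_{m\in S} \bar{P}^k(n,m)\left[\bar{F}(m) - F(m) + \sum_{u\in N_{c(m)}}\bigl(\bar{p}_{c(m),u}-p_{c(m),u}\bigr)D^{t-1-k}_u(m)\right].
\end{align*}
Taking absolute values, moving them inside the non-negative measure $\bar{P}^k(n,\cdot)$, and invoking the hypothesis~\eqref{eq:error_bound_condition} for each $t-1-k \ge 0$ gives
\begin{align*}
|\Delta^t(n)| \le \sum_{k=0}^{t-1}\sum_{m\in S}\bar{P}^k(n,m)\,G(m).
\end{align*}

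The third step is to divide by $t$ and take $t\to\infty$. On the left, $\Delta^t(n)/t \to \bar{\FF} - \FF$. On the right, the Cesaro average $\frac{1}{t}\sum_{k=0}^{t-1}\bar{P}^k(n,\cdot)$ converges to $\bar{\pi}$ by ergodicity of $\bar{R}$, yielding $\sum_m \bar{\pi}(m)G(m)$ in the limit. The only technical subtlety, and thus the main obstacle, lies in justifying this limit interchange when $G$ is unbounded: one needs enough integrability of $G$ against the Cesaro iterates, which is implicitly assumed by the setup (the right-hand side of the claim is a finite number and the framework assumes $\bar{\pi}G < \infty$, as is standard for Markov reward bounds; see the discussion surrounding Result~9.3.5 in~\cite{vandijk11inbook}). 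Combining the three steps gives $|\bar{\FF}-\FF| \le \sum_{n\in S}\bar{\pi}(n)G(n)$, as claimed.
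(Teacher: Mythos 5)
The paper does not give its own proof of this result; it is stated as a direct citation of Result~9.3.5 in van Dijk's chapter. Your argument is a correct reconstruction of the standard proof underlying that result, and the two key steps --- the telescoping identity for $\Delta^t = \bar F^t - F^t$ using the fact that both $\bar p_{c(n),\cdot}$ and $p_{c(n),\cdot}$ sum to one, and the unrolling $\Delta^t(n) = \sum_{k=0}^{t-1}\sum_m \bar P^k(n,m)L(m,t-1-k)$ with $L$ the bracketed term bounded by $G$ --- are exactly right.

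One remark on the final limit, which you correctly flag as the only delicate point. You can avoid the need for any ergodic-in-$G$-norm assumption by integrating the unrolled bound against $\bar\pi$ \emph{before} taking $t\to\infty$: since $\bar\pi$ is stationary for $\bar P$, one has $\sum_n \bar\pi(n)\sum_m \bar P^k(n,m)G(m) = \sum_m \bar\pi(m)G(m)$ exactly for every $k$, so
\begin{align*}
\sum_n \bar\pi(n)\,\frac{|\Delta^t(n)|}{t} \;\le\; \sum_m \bar\pi(m)G(m), \qquad \text{for all } t\ge 1.
\end{align*}
Since $\Delta^t(n)/t \to \bar{\FF}-\FF$ for every $n$, Fatou applied to $|\Delta^t(\cdot)|/t$ against the probability measure $\bar\pi$ gives $|\bar{\FF}-\FF| = \sum_n\bar\pi(n)\lim_t |\Delta^t(n)|/t \le \liminf_t \sum_n\bar\pi(n)|\Delta^t(n)|/t \le \sum_m\bar\pi(m)G(m)$. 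This sidesteps the Cesàro-convergence-of-$\bar P^k G$ issue entirely and needs only pointwise convergence of $\Delta^t/t$, which you already have from ergodicity of $R$ and $\bar R$.
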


In addition to the bound on $\left| \bar{\FF}-\FF \right|$, the following theorem is given in~\cite{vandijk11inbook} as well, which is called \emph{the comparison result} and can sometimes provide a better upper bound. 

\begin{theorem}[Result 9.3.2 in~\cite{vandijk11inbook}]
\label{thm:comparison_upper}
Suppose that $\bar{F}: S \to [0,\infty)$ satisfies
\begin{align}
\label{eq:comparison_upper_condition}
\bar{F}(n) - F(n) + \sum_{n^\prime\in S} \left( \bar{P}(n,n^\prime)-P(n,n^\prime) \right) D^t(n,n^\prime) \ge 0,
\end{align}
for all $n\in S$, $t \ge 0$. Then, 
\begin{align*}
\FF \le \bar{\FF}. 
\end{align*}
\end{theorem}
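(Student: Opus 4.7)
The plan is to establish the pointwise comparison $F^t(n) \le \bar{F}^t(n)$ for every $n\in S$ and every $t \ge 0$ by induction on $t$, and then to divide by $t$ and pass to the limit, using the fact stated in the excerpt that $F^t(n)/t \to \FF$ and $\bar{F}^t(n)/t \to \bar{\FF}$ for any starting state $n$ (which relies on the ergodicity of both $R$ and $\bar{R}$).

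The base case $t=0$ is immediate because $F^0 \equiv \bar{F}^0 \equiv 0$ by the recursion~\eqref{eq:cumulative_reward}. The induction step rests on a small observation that I expect to do all the real work: because $P(n,\cdot)$ and $\bar{P}(n,\cdot)$ are both probability distributions, the contribution of $F^t(n)$ to the bias-term sum cancels. Concretely,
\begin{align*}
\sum_{n'\in S}\bigl(\bar{P}(n,n') - P(n,n')\bigr) D^t(n,n')
&= \sum_{n'\in S}\bigl(\bar{P}(n,n') - P(n,n')\bigr)\bigl(F^t(n') - F^t(n)\bigr) \\
&= \sum_{n'\in S}\bar{P}(n,n')F^t(n') - \sum_{n'\in S}P(n,n')F^t(n'),
\end{align*}
since $\sum_{n'}(\bar{P}(n,n') - P(n,n')) = 0$. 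Plugging this identity into the hypothesis~\eqref{eq:comparison_upper_condition} and adding $F(n)$ to both sides yields
\begin{align*}
\bar{F}(n) + \sum_{n'\in S}\bar{P}(n,n')F^t(n') \;\ge\; F(n) + \sum_{n'\in S}P(n,n')F^t(n') \;=\; F^{t+1}(n),
\end{align*}
where the last equality is the recursion~\eqref{eq:cumulative_reward} for $R$. Now invoke the inductive hypothesis $F^t \le \bar{F}^t$ and the nonnegativity of $\bar{P}(n,\cdot)$ to upgrade $F^t(n')$ to $\bar{F}^t(n')$ inside the sum, obtaining
\begin{align*}
\bar{F}^{t+1}(n) \;=\; \bar{F}(n) + \sum_{n'\in S}\bar{P}(n,n')\bar{F}^t(n') \;\ge\; F^{t+1}(n),
\end{align*}
which closes the induction.

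The only step that requires any care is the final limit: having established $F^t(n) \le \bar{F}^t(n)$ for all $t$, dividing by $t$ and sending $t\to\infty$ gives $\FF \le \bar{\FF}$, provided both Cesàro limits exist and are independent of $n$. The excerpt explicitly notes this convergence for $R$, and the same argument applies to $\bar{R}$ since $\bar{\pi}$ is a stationary distribution; hence no further work is needed. I do not anticipate any genuine obstacle in this proof: the main point is simply the stochasticity-based cancellation identity for the bias-term sum, after which the induction and the limit passage are routine.
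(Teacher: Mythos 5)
The paper cites this theorem (Result 9.3.2 of van Dijk) without giving a proof, so there is no in-paper argument to compare against. Your proof is correct and is the standard argument: the cancellation $\sum_{n'}(\bar P(n,n')-P(n,n'))F^t(n)=0$ follows from both kernels being stochastic, the rearranged hypothesis together with the recursion~\eqref{eq:cumulative_reward} yields $F^{t+1}(n)\le \bar F(n)+\sum_{n'}\bar P(n,n')F^t(n')$, nonnegativity of $\bar P$ plus the inductive hypothesis lets you replace $F^t$ by $\bar F^t$ to close the induction, and the Ces\`aro limits $F^t(n)/t\to\FF$, $\bar F^t(n)/t\to\bar\FF$ (both relying on the ergodicity of the respective chains, with $\bar F^t$ built from the recursion for $\bar R$ and one-step reward $\bar F$) give the conclusion. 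This matches the proof one finds in van Dijk's treatment; no gaps.
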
 

Similarly, if the LHS of~\eqref{eq:comparison_upper_condition} is non-positive, then $\FF \ge \bar{\FF}$.

\subsection{Geometric ergodicity and $\mu$-ergodicity}
\label{ssec:geometric_mu_ergodicity}

In this section, we review some definitions and results on geometric ergodicity and $\mu$-ergodicity that are given in~\cite{meyn2012markov}. First, we give the following definitions.

\begin{definition}[$\mu$-norm]
Let $\mu: S\to [1,\infty)$. Then, for $h: S\to \mathbb{R}$, the $\mu$-norm of $h$ is defined as
\begin{align}
| h |_\mu = \sup_{n\in S} \frac{|h(n)|}{\mu(n)}.
\end{align}
\end{definition}

\begin{definition}[$\mu$-total variation norm]
Let $\mu: S\to [1,\infty)$. Then, for $h: S\to \mathbb{R}$, the $\mu$-total variation norm of $h$ is given by
\begin{align}
\| h \|_{\mu} = \sup_{g: |g|_\mu \le 1} \left| \sum_{n\in S} h(n)g(n) \right|. 
\end{align}
\end{definition}


\begin{definition}[Geometric ergodicity]
A random walk $R$ is geometrically ergodic if there exist function $V: S\rightarrow [1,\infty)$, constant $b > 0$, $\varepsilon > 0$ and finite set $B\subset S$ such that
\begin{align}
\label{eq:quadratic_bound_geometric_ergodic_condition}
\sum_{u\in N_{c(n)}} p_{c(n),u}V(n+u) - V(n) \le -\varepsilon V(n) + b\mathbf{1}_B(n), \qquad \forall n \in S. 
\end{align}
\end{definition}

\begin{definition}[$\mu$-ergodicity]
A random walk $R$ is $\mu$-ergodic if there exist functions $\mu: S\rightarrow [1,\infty)$ and $V: S\rightarrow [0,\infty)$, constant $b > 0$ and finite set $B\subset S$ such that
\begin{align}
\label{eq:quadratic_bound_mu_ergodic_condition}
\sum_{u\in N_{c(n)}} p_{c(n),u}V(n+u) - V(n) \le -\mu(n) + b\mathbf{1}_B(n), \qquad \forall n \in S.
\end{align}
\end{definition}

If~\eqref{eq:quadratic_bound_geometric_ergodic_condition} holds, by taking $\mu = V$, $V^\prime = \varepsilon^{-1}V$, $b^\prime = \varepsilon^{-1}b$,~\eqref{eq:quadratic_bound_mu_ergodic_condition} holds for $\mu$, $V^\prime$, $b^\prime$ and $B$. Therefore, geometric ergodicity implies $\mu$-ergodicity. In the following lemmas, we present results from~\cite{meyn1994computable} and~\cite{meyn2012markov} for geometrically ergodic and $\mu$-ergodic random walks.

%


\begin{lemma}[{\cite[Theorem 2.3]{meyn1994computable}}]
\label{lem:quadratic_bound_geometric_ergodicity}
Suppose that $R$ is irreducible and aperiodic. If~\eqref{eq:quadratic_bound_geometric_ergodic_condition} holds for $V: S\rightarrow [1,\infty)$, $\varepsilon>0$, $b>0$ and finite set $B \subseteq S$, and
\begin{align}
\delta := \min_{n\in B}\sum_{\substack{u\in N_{c(n)}:\\ n+u\in B}}p_{\parind(n),u} > 0,
\end{align}
then, 
\begin{align}
\sum_{k=0}^\infty \| P_n^k - P_{n^\prime}^k \|_V \le (1+\gamma)\rho(1-\rho)^{-1}(\rho-\vartheta)^{-1}\left[ V(n) + V(n^\prime) \right],
\end{align}
for any $\rho > \vartheta = 1-M_B^{-1}$, where 
\begin{align*}
& M_B = (1-\lambda)^{-2} \left[ 1-\lambda+\hat{b}+\hat{b}^2+\eta(\hat{b}(1-\lambda)+\hat{b}^2) \right], \\
& \gamma = \delta^{-2}\left[ 4b+2\delta(1-\varepsilon) v_B \right], \\
& \lambda = (1-\varepsilon+\gamma)/(1+\gamma), \qquad \hat{b} = v_B + \gamma, \\
& v_B = \max_{n\in B} V(n), \qquad \eta = \delta^{-5}(4-\delta^2)\varepsilon^{-2}b^2. 
\end{align*}
\end{lemma}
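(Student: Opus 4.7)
The plan is to follow the standard computable-bounds strategy of Meyn and Tweedie: use the geometric drift condition to control how quickly the chain returns to the set $B$, use the lower bound $\delta$ together with irreducibility and aperiodicity to establish a minorization on $B$, and then combine the two into an explicit coupling argument that dominates $\sum_k \|P_n^k - P_{n'}^k\|_V$.

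First, I would rewrite the drift inequality~\eqref{eq:quadratic_bound_geometric_ergodic_condition} as $PV \le (1-\varepsilon)V + b\mathbf{1}_B$, so that iteration gives geometric moments of the hitting time $\tau_B$ to $B$: by a standard Lyapunov argument, $E_n[(1-\varepsilon)^{-\tau_B} \mathbf{1}\{\tau_B<\infty\}] \le \varepsilon^{-1}(V(n)+b)$ (or a variant of it), which will eventually feed into the definition of $M_B$ and $\hat b$. This step is essentially a one-line invocation of the Comparison Theorem applied to the submartingale $\{(1-\varepsilon+\gamma)^{-k}V(\Phi_k)\}$ stopped at the first return time to $B$, where $\gamma$ is chosen so that the one-step drift off $B$ still gives a geometric contraction.

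Next, I would set up the bivariate chain $(\Phi_k,\Phi'_k)$ with independent marginals driven by $P$. Using the drift inequality applied to $V(n)+V(n')$, both copies return to $B\times B$ with geometrically bounded hitting time moments, and the constant $v_B = \max_{n\in B}V(n)$ enters the bound on $E[V(\Phi_{\tau_{B\times B}})+V(\Phi'_{\tau_{B\times B}})]$. Once in $B\times B$, the hypothesis $\delta>0$, together with irreducibility and aperiodicity, lets me construct a minorization of the form $P(n,\cdot)\ge \delta\, \nu(\cdot)$ for $n\in B$ with $\nu$ supported on $B$; I couple with probability $\delta^2$ at each joint visit to $B\times B$. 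The coupling time $T$ is then the sum of at most Geometric$(\delta^2)$-many excursions from $B\times B$, and its generating function can be bounded explicitly in terms of $\lambda = (1-\varepsilon+\gamma)/(1+\gamma)$ and $\hat b = v_B+\gamma$. The bound
\begin{align*}
\|P_n^k - P_{n'}^k\|_V \le E_{n,n'}\!\left[(V(\Phi_k)+V(\Phi'_k))\,\mathbf{1}\{T>k\}\right]
\end{align*}
summed over $k$ then yields the stated geometric rate $\rho$ and prefactor $(1+\gamma)\rho(1-\rho)^{-1}(\rho-\vartheta)^{-1}$, once one identifies $\vartheta = 1-M_B^{-1}$ as the contraction rate produced by the excursion/coupling analysis.

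The main obstacle is bookkeeping, not ideas. Each of the constants $\gamma$, $\lambda$, $\hat b$, $\eta$, $M_B$ is chosen precisely so that a particular moment generating function of $\tau_B$ or of the excursion length is finite and admits a closed-form upper bound; matching the formulas in the statement requires carefully propagating the geometric drift estimate through the renewal decomposition of the coupling time. I would do this by first proving the result with unspecified constants, and only at the end substitute the optimized choices, following the explicit calculations in~\cite{meyn1994computable}; I would not attempt to rederive those optimal choices from scratch but would cite them, since the novelty needed for the present paper is only the \emph{use} of the inequality, not a new derivation.
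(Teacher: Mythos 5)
The paper does not prove this lemma at all: it is stated as a direct citation of Theorem~2.3 in the Meyn--Tweedie paper on computable bounds, and no proof environment follows it. So there is no ``paper's own proof'' to compare against. Your sketch is a reasonable high-level reconstruction of the strategy in that cited reference (drift condition to control return-time moments, a minorization/coupling step localized to $B$, a renewal decomposition of the coupling time, then summation of the $V$-weighted total variation differences), and you correctly recognize at the end that the specific forms of $\gamma$, $\lambda$, $\hat b$, $\eta$, $M_B$ should simply be cited rather than re-derived, which is exactly what the present paper does.

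One small caution, since your sketch pins it down slightly loosely: the quantity $\delta$ here is the minimum one-step probability of \emph{remaining in} $B$, not directly a minorization constant in the usual sense of $P(n,\cdot) \ge \delta\,\nu(\cdot)$ on $B$. In Meyn--Tweedie's derivation, $\delta$ enters the computable constants through the analysis of how long the bivariate chain lingers in $B \times B$ before a successful coupling, and the powers $\delta^{-2}$ and $\delta^{-5}$ in $\gamma$ and $\eta$ emerge from that bookkeeping rather than from a single-step minorization. Conflating the two is harmless for a road-map, but would matter if you actually tried to reproduce the exact formulas, which, as you note, you should not attempt for the purposes of this paper.
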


\begin{lemma}[{\cite[Theorem 14.2.3]{meyn2012markov}}]
\label{lem:quadratic_bound_mu_ergodicity}
Suppose that $R$ is irreducible and aperiodic. If~\eqref{eq:quadratic_bound_mu_ergodic_condition} holds for $V: S\rightarrow [0,\infty)$, $\mu: S\rightarrow [1,\infty)$, $b>0$ and finite set $B \subseteq S$, then there exists $b_0<\infty$ such that for any $n,n^\prime \in S$, 
\begin{align}
\sum_{k=0}^\infty \| P^k_n-P^k_{n^\prime} \|_\mu \le V(n)+V(n^\prime) + b_0.
\end{align}
\end{lemma}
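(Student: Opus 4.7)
This statement is taken from \cite[Theorem~14.2.3]{meyn2012markov}, so the plan is to follow that book's $\mu$-norm ergodic theorem strategy. The starting point is to convert the one-step drift inequality~\eqref{eq:quadratic_bound_mu_ergodic_condition} into a bound on the $\mu$-weighted cost of an excursion from $B$. By iterating~\eqref{eq:quadratic_bound_mu_ergodic_condition} along the path and applying an optional-stopping argument to the supermartingale obtained by telescoping $V$, one gets
\[
E_n\!\left[\sum_{k=0}^{\tau_B-1}\mu(X_k)\right] \le V(n), \qquad n \in S,
\]
where $\tau_B=\inf\{k\ge 1:X_k\in B\}$. Specialising to $n\in B$ and using $\sup_{x\in B}V(x)<\infty$ gives a uniform bound on the $\mu$-mass accumulated between consecutive visits to $B$.

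Second, irreducibility together with aperiodicity and any drift of this form implies that $B$ is a small (petite) set, so there exist $m$, $\alpha>0$ and a probability measure $\nu$ on $S$ with $P^m(x,\cdot)\ge\alpha\nu(\cdot)$ for every $x\in B$. Using this minorization I would construct a Nummelin-type coupling $(X_k,Y_k)$ of two copies of the chain started at $n$ and $n'$: whenever both components lie in $B$ and an $m$-block of time has elapsed, the two copies coalesce with probability at least $\alpha$, and otherwise evolve independently until the next joint visit to $B$. Let $T$ denote the resulting coupling time. The definition of $\|\cdot\|_\mu$ combined with the coupling inequality yields
\[
\|P^k_n-P^k_{n'}\|_\mu \le E_{n,n'}\!\left[(\mu(X_k)+\mu(Y_k))\mathbf{1}\{T>k\}\right],
\]
so the claim reduces to bounding $\sum_{k\ge 0}E_{n,n'}[\mu(X_k)\mathbf{1}\{T>k\}]$ and its $Y$-analogue by $V(n)$ (respectively $V(n')$) plus a constant that does not depend on $n,n'$.

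The last step is where the main work lies. The excursion bound from the first step controls the $\mu$-mass accumulated by $X$ until it first enters $B$, which contributes the $V(n)$ term. The $\mu$-mass accumulated during each subsequent excursion is bounded in expectation by $\sup_{x\in B}V(x)+b$, uniformly in the starting point inside $B$. The number of such excursions before a successful coupling is stochastically dominated by a geometric random variable with success probability $\alpha$, independent of $n,n'$. Combining these three ingredients via Wald-type identities gives the required bound, with $b_0$ depending only on $b$, $\alpha$, $m$ and $\sup_{x\in B}V(x)$. The principal technical obstacle is synchronising the two chains: once one of them has reached $B$ the other may still be mid-excursion, and one has to show that the $\mu$-integrated waiting time in this regime remains finite. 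This is handled by applying the excursion estimate conditionally on which chain has most recently entered $B$, together with the uniform-in-$B$ moment bound noted above, so that each coupling attempt costs only a bounded amount of $\mu$-mass in expectation.
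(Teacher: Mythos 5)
The paper does not actually prove this lemma; it is imported verbatim as Theorem~14.2.3 from Meyn and Tweedie, so there is no internal proof to compare against. Your sketch is a reasonable reconstruction of the book's argument: use the drift inequality to control the $\mu$-cost of an excursion, establish that $B$ is petite, build a Nummelin/split-chain coupling, and bound the number of coupling attempts by a geometric random variable.

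Two points are worth flagging. First, your excursion estimate $E_n\!\left[\sum_{k=0}^{\tau_B-1}\mu(X_k)\right]\le V(n)$ is exact only for $n\notin B$; for $n\in B$ the drift inequality gives $E_n[V(X_1)]\le V(n)-\mu(n)+b$, so the bound should carry an extra $b$, i.e.\ it is $\le V(n)+b$ on $B$. This is just bookkeeping and does not change the structure of the argument, but the clean statement you wrote is literally false at the boundary. Second, and more substantively, the Meyn--Tweedie version of the summed total-variation bound,
\begin{align*}
\sum_{k\ge 0}\|P^k_n-P^k_{n'}\|_\mu \le V(n)+V(n')+b_0,
\end{align*}
requires $\pi(V)<\infty$ in addition to the hypotheses the paper states; without it the coupling argument does not close, because the waiting time for the two chains to be simultaneously in $B$ involves the stationary mean of $V$. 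The paper's lemma omits this hypothesis, so your sketch should either note that it is being assumed or justify it from the drift condition for the specific $V$ used later. Beyond that, your strategy --- excursion control, petiteness, coupling, geometric trials --- is the one the reference uses, so the proposal is sound as a proof outline.
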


In both lemmas, bounds can be obtained on the sum of $\mu$-total variation norms of $P^k_n - P^k_{n^\prime}$ over $k$. The difference is that under the stronger geometric ergodicity condition an explicit bound can be obtained, while under the weaker $\mu$-ergodicity condition, the constant $b_0$ of the bound is not known explicitly. 


\section{Bounds on the bias terms}
\label{sec:quadratic_bound_bias_term_bound}

In this section, we show that if $R$ has negative drift, then the bias terms are bounded by a geometric function as well as a quadratic function. In Section~\ref{ssec:quadratic_bound_rw_negative_drift}, we define what we call a random walk with negative drift. Next, in Section~\ref{ssec:quadratic_bound_geometric_bound_bias} we show that a random walk with negative drift is geometrically ergodic. Hence, we can obtain a geometric bounding function on the bias terms. Then, in Section~\ref{ssec:quadratic_bound_bias_terms_quadratic} we show that a random walk with negative drift is also $\mu$-ergodic. Thus, the bias terms can be bounded by a quadratic function. 

\subsection{Random walks with negative drift}
\label{ssec:quadratic_bound_rw_negative_drift}

We first consider the following definitions. For state $n\in S$, let $I(n)$ denote the dimensions $i$ for which $n_i > 0$, \ie
\begin{align}
I(n) = \left\{ i=1,\dots,M \mid n_i > 0 \right\}. 
\end{align}
Moreover, for $i=1,\dots,M$, define partial sums of the transition probabilities in the following way,
\begin{align}
\label{eq:quadratic_bound_sum_positive}
s^{+}_i(n) =& \sum_{u\in N_{c(n)}: u_i = 1} p_{c(n),u}, \\
\label{eq:quadratic_bound_sum_zero}
s^{\circ}_i(n) =& \sum_{u\in N_{c(n)}: u_i = 0} p_{c(n),u}, \\
\label{eq:quadratic_bound_sum_negative}
s^{-}_i(n) =& \sum_{u\in N_{c(n)}: u_i = -1} p_{c(n),u}.
\end{align}
Note that if $i \notin I(n)$, then $n_i = 0$ and $s^{-}_i(n) = 0$. 
Since the transition probabilities sum up to one, for any $i=1,\dots,M$ and $n\in S$, 
\begin{align}
\label{eq:quadratic_bound_probability_sum1}
s^{+}_i(n) + s^{\circ}_i(n) + s^{-}_i(n) = 1. 
\end{align}
Therefore, below we define a random walk with negative drift. 

\begin{definition}[Random walk with negative drift]
\label{def:quadratic_bound_negative_drift}
A random walk $R$ is said to have negative drift if
\begin{align}
\sup_{n\in S, i\in I(n)} \left\{ s^{+}_i(n) - s^{-}_i(n) \right\} < 0.
\end{align}
\end{definition}

Intuitively, it means that in any dimension $i$, if $n_i > 0$, then the drift at state $n$ in dimension $i$ is strictly negative. Since $R$ is homogeneous with respect to a partition $C$, the supremum above can be obtained.

\subsection{Geometric bounds on the bias terms}
\label{ssec:quadratic_bound_geometric_bound_bias}

In this section, we show that if a random walk has negative drift, then it is geometrically ergodic. Using the results given in the previous section, we can obtain a geometric bounding function on the bias terms. 

Notice that in Lemmas~\ref{lem:quadratic_bound_geometric_ergodicity} and~\ref{lem:quadratic_bound_mu_ergodicity_drift}, bounds have been obtained on the $\mu$-total variation norm of $P^k_n - P^k_{n^\prime}$. Before we dive into geometric ergodicity, we build a relation between bounds on the bias terms and the $\mu$-total variation norm of $P^k_n - P^k_{n^\prime}$ in the following lemma. 

\begin{lemma}
\label{lem:quadratic_bound_bias_and_norm}
Consider a $C$-linear function $F:S \rightarrow [0,\infty)$. Let $\mu: S\rightarrow[1,\infty)$ be a function for which $|F|_\mu \le 1$. Then, 
\begin{align}
\left| D^t_u(n) \right| \le \sum_{k=0}^\infty \| P^k_{n+u} - P^k_{n} \|_\mu.
\end{align}
\end{lemma}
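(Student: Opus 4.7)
The plan is a direct unfolding of the definitions, essentially chaining together three facts: the representation of $F^t$ as an integral against the $k$-step distribution, the triangle inequality in $k$, and the dual characterization of the $\mu$-total variation norm.

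First I would write out $D^t_u(n)$ using~\eqref{eq:definition_cumulative_reward}:
\begin{align*}
D^t_u(n) = F^t(n+u)-F^t(n) = \sum_{k=0}^{t-1}\sum_{m\in S}\bigl[P^k(n+u,m)-P^k(n,m)\bigr]F(m).
\end{align*}
Applying the triangle inequality in $k$ gives
\begin{align*}
|D^t_u(n)| \le \sum_{k=0}^{t-1}\left|\sum_{m\in S}\bigl[P^k(n+u,m)-P^k(n,m)\bigr]F(m)\right|.
\end{align*}

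Next I would bound each summand using the definition of the $\mu$-total variation norm. Writing $h_k(m) = P^k(n+u,m)-P^k(n,m)$ and noting that $F$ is an admissible test function because $|F|_\mu \le 1$ by hypothesis, the defining supremum of $\|h_k\|_\mu$ yields
\begin{align*}
\left|\sum_{m\in S} h_k(m) F(m)\right| \le \|h_k\|_\mu = \|P^k_{n+u}-P^k_n\|_\mu.
\end{align*}
Substituting this into the previous display and extending the finite sum to an infinite sum (which is valid because the summands are non-negative) gives the claimed bound
\begin{align*}
|D^t_u(n)| \le \sum_{k=0}^{t-1}\|P^k_{n+u}-P^k_n\|_\mu \le \sum_{k=0}^\infty \|P^k_{n+u}-P^k_n\|_\mu.
\end{align*}

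There is no real obstacle in this argument; the only delicate point is verifying that $F$ is an allowed test function in the supremum defining $\|\cdot\|_\mu$, which follows immediately from the assumption $|F|_\mu \le 1$. The $C$-linearity of $F$ and non-negativity play no direct role in the bound itself; they matter only through the hypothesis that some $\mu \ge 1$ dominating $F$ exists, which in later applications will be ensured by choosing $\mu$ quadratic or geometric in the state coordinates. The right-hand side does not depend on $t$, so the bound is uniform in $t$ as required by Theorem~\ref{thm:Markov_reward_approach_result}.
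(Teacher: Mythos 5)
Your proof is correct and follows essentially the same route as the paper's: expand $D^t_u(n)$ via the definition of $F^t$, apply the triangle inequality over $k$, invoke the dual characterization of the $\mu$-total variation norm with $F$ as the test function (admissible because $|F|_\mu \le 1$), and extend the finite sum to an infinite one by non-negativity. Your added remark that $C$-linearity plays no direct role here (only later, when constructing a suitable $\mu$) is an accurate observation, but otherwise this matches the paper's argument line for line.
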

\begin{proof}
Since
\begin{align*}
D^t_u(n) = D^t(n,n+u) = F^t(n+u) - F^t(n), 
\end{align*}
using~\eqref{eq:definition_cumulative_reward} and the definition of $\mu$-total variation norm, we have
\begin{align}
\left| D^t_u(n) \right| =& \left| \sum_{k=0}^{t-1}\sum_{m\in S} \left[ P^k_{n+u}(m)F(m) - P^k_n(m)F(m) \right] \right| \\
\le& \sum_{k=0}^{t-1} \left| \sum_{m\in S}\left[ P^k_{n+u}(m)-P^k_{n}(m) \right]F(m) \right| \\
\le& \sum_{k=0}^{\infty} \| P^k_{n+u}-P^k_{n} \|_\mu.
\end{align}
\end{proof}

Thus, if we establish an upper bound on $\sum_{k=0}^{\infty} \| P^k_{n+u}-P^k_{n} \|_\mu$, we also obtain a bounding function on $D^t_u(n)$. In the next lemma, we show that a random walk $R$ with negative drift is geometrically ergodic.

\begin{lemma}
\label{lem:quadratic_bound_geometric_ergodicity_drift}
Suppose that the random walk $R$ is irreducible, aperiodic, positive recurrent and has negative drift. Let $r_i$, $i=1,\dots,M$ satisfy
\begin{align}
\label{eq:quadratic_bound_definition_r_epsilon}
1 < r_i < \inf_{n\in S: i\in I(n)}\left\{ \frac{s^{-}_i(n)}{s^{+}_i(n)} \right\}. 
\end{align}
Then, $R$ is geometrically ergodic, with
\begin{align}
V(n) = v_0 + \sum_{i=1}^M v_ir_i^{n_i},
\end{align}
for any $v_0\ge 1$, $v_i > 0$, $i=1,\dots,M$, $0 < \varepsilon < \varepsilon^*$, and 
\begin{align}
& b = \varepsilon v_0 + \sum_{i=1}^M v_i\left( \sup_{n\in S} \left\{s^{+}_i(n)\right\} (r_i-1) +\varepsilon \right), \\
& B = \left\{ n\in S \mid n_i \le \max\left\{ \frac{\log(v_i^{-1}b)-\log(\varepsilon^*-\varepsilon)}{\log r_i},1 \right\},\  \forall i = 1,\dots,M \right\},
\end{align}
where $\varepsilon_* = \min_{n\in S, i\in I(n)}\left\{ s^{+}_i(n)(1-r_i)+s^{-}_i(n)(1-r_i^{-1}) \right\}$. 
\end{lemma}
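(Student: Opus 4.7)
The plan is to verify the geometric ergodicity condition~\eqref{eq:quadratic_bound_geometric_ergodic_condition} directly for the proposed $V$, $\varepsilon$, $b$ and $B$. The key feature of the separable Lyapunov function $V(n) = v_0 + \sum_i v_i r_i^{n_i}$ is that the drift operator decouples across coordinates, reducing the check to a sum of one-dimensional estimates.

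\textbf{Step 1: Coordinate-wise drift.} Substituting $V(n+u) = v_0 + \sum_i v_i r_i^{n_i} r_i^{u_i}$ and invoking~\eqref{eq:quadratic_bound_probability_sum1} to absorb the ``stay'' contribution, one obtains
\begin{align*}
\sum_{u\in N_{c(n)}} p_{c(n),u}V(n+u) - V(n) = \sum_{i=1}^M v_i r_i^{n_i}\bigl[s^+_i(n)(r_i-1) + s^-_i(n)(r_i^{-1}-1)\bigr].
\end{align*}
For $i\in I(n)$, the bracket equals $-[s^+_i(n)(1-r_i)+s^-_i(n)(1-r_i^{-1})] \le -\varepsilon^*$ by the definition of $\varepsilon^*$, and the constraint $1 < r_i < s^-_i(n)/s^+_i(n)$ ensures $\varepsilon^* > 0$. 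For $i\notin I(n)$, the bracket collapses to $s^+_i(n)(r_i-1)\ge 0$ because $s^-_i(n)=0$, and the prefactor is $r_i^{n_i}=1$.

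\textbf{Step 2: Matching $-\varepsilon V(n) + b\mathbf{1}_B(n)$.} Adding $\varepsilon V(n)$ to the drift and regrouping yields (a) a strictly negative contribution $-(\varepsilon^*-\varepsilon) v_i r_i^{n_i}$ for each $i\in I(n)$, and (b) non-negative terms $\varepsilon v_0$ together with $v_i[s^+_i(n)(r_i-1)+\varepsilon]$ for each $i\notin I(n)$. By taking the supremum in $n$ of each $s^+_i$ and summing over all $i$, group (b) is uniformly bounded above by the stated $b$. Hence if $n\in B$ the required bound $b\mathbf{1}_B(n)=b$ follows by simply discarding group (a). If $n\notin B$, some coordinate $i$ satisfies $n_i\ge 1$ and $n_i\log r_i > \log[v_i^{-1}b/(\varepsilon^*-\varepsilon)]$; this places $i$ in $I(n)$ and makes the single term $-(\varepsilon^*-\varepsilon) v_i r_i^{n_i}$ strictly less than $-b$, which dominates group (b) and yields a strictly negative total, matching $b\mathbf{1}_B(n)=0$.

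The main obstacle is the bookkeeping at boundary coordinates: one has to apply~\eqref{eq:quadratic_bound_probability_sum1} correctly when $s^-_i(n)=0$, and verify that the logarithmic threshold defining $B$ is tuned exactly so that the exponentially growing negative contribution from a single coordinate surpasses the worst-case positive mass $b$. Once the one-dimensional drift calculation is in place, the remainder is a term-by-term comparison against the stated definitions of $b$ and $B$.
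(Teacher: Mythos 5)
Your proof is correct and takes essentially the same route as the paper's: compute the coordinate-wise drift of the separable Lyapunov function using the partition identity, add $\varepsilon V(n)$, bound the positive contribution uniformly by $b$, and show that outside $B$ a single dominant geometric term pushes the total below zero. The only piece the paper's proof includes that you omit is a preliminary well-definedness check that the range $1 < r_i < \inf_n\{s_i^-(n)/s_i^+(n)\}$ is non-empty (using irreducibility to get finiteness of the infimum and negative drift to get the infimum $> 1$) and that $V \ge 1$; these are routine but the paper makes them explicit.
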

\begin{proof}
First we show that $V(n)$ is well defined. For any $i=1,\dots,M$, since $R$ is irreducible, there exists at least one $n_0\in S$ for which $i\in I(n_0)$ and $s^{+}_i(n_0) > 0$. Therefore, 
\begin{align}
\inf_{n\in S: i\in I(n)}\left\{ \frac{s^{-}_i(n)}{s^{+}_i(n)} \right\} \le \frac{s^{-}_i(n_0)}{s^{+}_i(n_0)} < \infty. 
\end{align}
Due to negative drift of $R$, it can be verified that 
\begin{align}
\inf_{n\in S: i\in I(n)}\left\{ \frac{s^{-}_i(n)}{s^{+}_i(n)} \right\} > 1. 
\end{align}
Thus, there exists $r_i$ satisfying~\eqref{eq:quadratic_bound_definition_r_epsilon} and $V(n)$ is well defined. Moreover, $V(n) \ge 1$ since $v_0 \ge 1$ and $v_i \ge 0$. 

Next, for geometric ergodicity, it is sufficient to verify that~\eqref{eq:quadratic_bound_geometric_ergodic_condition} holds. We have
\begin{align}
\nonumber
\sum_{u\in N_{c(n)}} p_{c(n),u}V(n+u) &- V(n) \\
\nonumber
=& \sum_{u\in N_{c(n)}} p_{c(n),u} \left( v_0+\sum_{i=1}^M v_ir_i^{n_i+u_i} \right) - v_0 - \sum_{i=1}^M v_ir_i^{n_i} \\
=& \sum_{u\in N_{c(n)}} p_{c(n),u} \sum_{i=1}^M v_ir_i^{n_i+u_i} - \sum_{i=1}^M v_ir_i^{n_i}.
\end{align}
Since 
\begin{align}
\sum_{u\in N_{c(n)}} p_{c(n),u}\sum_{i=1}^M v_ir_i^{n_i+u_i} = \sum_{i=1}^M v_i\left( s^{+}_i(n)r_i^{n_i+1}+s^{\circ}_i(n)r_i^{n_i}+s^{-}_i(n)r_i^{n_i-1} \right),
\end{align}
using~\eqref{eq:quadratic_bound_probability_sum1} we get
{\small
\begin{align}
\nonumber
\sum_{u\in N_{c(n)}} &p_{c(n),u}V(n+u) - V(n) \\
=& \sum_{i=1}^M v_i\left( s^{+}_i(n)r_i^{n_i+1} +s^{\circ}_i(n)r_i^{n_i}+s^{-}_i(n)r_i^{n_i-1} \right) \\
\nonumber
&\ - \sum_{i=1}^M v_i\left( s^{+}_i(n)+s^{\circ}_i(n)+s^{-}_i(n) \right) r_i^{n_i} \\
\label{eq:quadratic_bound_diff}
=& \sum_{i\in I(n)} \left( s^{+}_i(n)(r_i-1)+s^{-}_i(n)(r_i^{-1}-1) \right) r_i^{n_i} + \sum_{i\notin I(n)} v_i\left( s^{+}_i(n)(r_i-1) \right).
\end{align}
}
Note that $r_i^{n_i}$ vanishes from the second term in~\eqref{eq:quadratic_bound_diff} since $n_i=0$ and $r_i^{n_i}=1$. Thus, 
{\small
\begin{align}
\nonumber
& \sum_{u\in N_{c(n)}} p_{c(n),u}V(n+u) - V(n) + \varepsilon V(n) \\
\nonumber
=& \sum_{i\in I(n)} v_i\left( s^{+}_i(n)(r_i-1)+s^{-}_i(n)(r_i^{-1}-1)+\varepsilon \right) r_i^{n_i} + \varepsilon v_0 + \sum_{i\notin I(n)} v_i\left( s^{+}_i(n)(r_i-1)+\varepsilon \right) \\
\label{eq:quadratic_bound_diff_negative}
\le& \sum_{i\in I(n)} v_i\left( -\varepsilon_*+\varepsilon \right) r_i^{n_i} + b,
\end{align}}
where we use the definition of $\varepsilon^*$ and $b$ for the inequality. 
Since $1 < r_i < s^{-}_i(n)/s^{+}_i(n)$, it can be checked that as a function of $r_i$, $s^{+}_i(n)(1-r_i)+s^{-}_i(n)(1-r_i^{-1}) > 0$ for any $i\in I(n)$. Hence, $\varepsilon_* > 0$ and $-\varepsilon^* + \varepsilon < 0$. Thus, the first term in~\eqref{eq:quadratic_bound_diff_negative} is decreasing in $n_i$ while the second term is a constant in $n$. From the definition of $B$ it can be checked that for any $n\notin B$ there exists at least an $i_0\in I(n)$ for which 
\begin{align}
v_{i_0}\left( -\varepsilon_*+\varepsilon \right) r_{i_0}^{n_{i_0}} + b \le 0. 
\end{align}
Therefore,~\eqref{eq:quadratic_bound_geometric_ergodic_condition} holds. 
\end{proof}

Next we use the results of Lemmas~\ref{lem:quadratic_bound_geometric_ergodicity},~\ref{lem:quadratic_bound_bias_and_norm} and~\ref{lem:quadratic_bound_geometric_ergodicity_drift} to obtain a geometric bounding function on the bias terms, which is one of the main results of this chapter. 

\begin{theorem}
\label{thm:quadratic_bound_bound_bias_terms_geometric}
Suppose that random walk $R$ is irreducible, aperiodic, positive recurrent and has negative drift. Then, for any $C$-linear $F: S\rightarrow [0,\infty)$, 
\begin{align}
|D^t_u(n)| \le (1+\gamma)\rho(1-\rho)^{-1}(\rho-\vartheta)^{-1} \left[ V(n) + V(n+u) \right],
\end{align}
for any $\rho > \vartheta = 1-M_B^{-1}$, where 
\begin{align*}
& V(n) = f_0 + \sum_{i=1}^M f^*_ir_i^{n_i}, \quad f_0 = \max_{k\in K}\left\{ f_{k,0},1 \right\}, \quad f^*_i = \frac{\max_{k} \left\{ f_{k,i},1 \right\}}{\log r_i\cdot r_i^{\log r_i}}, \\
& 1 < r_j < \inf_{n\in S: i\in I(n)}\left\{ \frac{s^{-}_i(n)}{s^{+}_i(n)} \right\}, \\
& \varepsilon^* = \inf_{n\in S, i\in I(n)}\left\{ s^{+}_i(n)(1-r_i)+s^{-}_i(n)(1-r_i^{-1}) \right\}, \quad 0 < \varepsilon < \varepsilon_*, \\
& b = \varepsilon f_0 + \sum_{i=1}^M f^*_i\left( \sup_{n\in S} \left\{s^{+}_i(n)\right\} (r_i-1) +\varepsilon \right), \\
& M_B = (1-\lambda)^{-2} \left[ 1-\lambda+\hat{b}+\hat{b}^2+\eta(\hat{b}(1-\lambda)+\hat{b}^2) \right], \\
& \delta = \min_{n\in B} \sum_{\substack{u\in N(n): \\ n+u \in B}} p_{\parind(n),u}, \qquad v_B = f_0 + \frac{Mb}{\varepsilon_* - \varepsilon}, \qquad \eta = \delta^{-5}(4-\delta^2)\varepsilon^{-2}b^2, \\
& \gamma = \delta^{-2}\left[4b+2\delta(1-\varepsilon) v_B \right], \quad \lambda = (1-\varepsilon+\gamma)/(1+\gamma), \quad \hat{b} = v_B + \gamma, \\
\end{align*}
\end{theorem}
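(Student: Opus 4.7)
The plan is to combine three results already at our disposal: Lemma~\ref{lem:quadratic_bound_geometric_ergodicity_drift}, which produces a geometric Lyapunov function $V$ for any $R$ with negative drift; Lemma~\ref{lem:quadratic_bound_bias_and_norm}, which converts a bound on $\sum_k \|P^k_{n+u}-P^k_n\|_\mu$ into a bound on $|D^t_u(n)|$ provided $|F|_\mu \le 1$; and Lemma~\ref{lem:quadratic_bound_geometric_ergodicity}, which supplies an explicit bound on this sum of $V$-total variation norms under the drift condition. The role of $\mu$ in Lemma~\ref{lem:quadratic_bound_bias_and_norm} and of $V$ in Lemma~\ref{lem:quadratic_bound_geometric_ergodicity} will be played by the same function $V$ defined in the statement.

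First I would invoke Lemma~\ref{lem:quadratic_bound_geometric_ergodicity_drift} with the free parameters specialised to $v_0 := f_0$ and $v_i := f^*_i$, using the prescribed $r_i$ and any $\varepsilon \in (0,\varepsilon^*)$. Since $f_0 \ge 1$ and $f^*_i > 0$, all hypotheses of that lemma are met, so $R$ is geometrically ergodic with Lyapunov function $V$, geometric drift constant $\varepsilon$, constant $b$ and finite set $B$ exactly as stated in the theorem; note also that $V(n)\ge 1$ everywhere.

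The decisive technical step is to verify $|F|_V \le 1$. Because $F$ is $\parti$-linear, it suffices to prove $f_{k,0} + \sum_i f_{k,i} n_i \le f_0 + \sum_i f^*_i r_i^{n_i}$ for every $k$ and every $n \in C_k$. The constant part is handled by $f_0 = \max_k\{f_{k,0},1\} \ge f_{k,0}$. For the linear part I would use, term by term, the scalar inequality obtained by minimising $r^x/x$ over $x>0$, which forces $x \le r^x/(e\log r)$ for all $x \ge 0$ and $r>1$. The coefficient $f^*_i$ is chosen precisely to absorb the worst-case $\max_k\{f_{k,i},1\}$ under this inequality, giving $f_{k,i} n_i \le f^*_i r_i^{n_i}$ and hence the required pointwise domination $F(n) \le V(n)$. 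This convex-analysis step, together with matching the exact definitions, is the only nontrivial piece; I expect it to be the main obstacle, as it pins down both the form of $f^*_i$ and the implicit requirement that $r_i$ be chosen compatibly.

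With $|F|_V \le 1$ in hand, Lemma~\ref{lem:quadratic_bound_bias_and_norm} gives $|D^t_u(n)| \le \sum_{k=0}^\infty \|P^k_{n+u}-P^k_n\|_V$, and Lemma~\ref{lem:quadratic_bound_geometric_ergodicity} bounds the right-hand side by $(1+\gamma)\rho(1-\rho)^{-1}(\rho-\vartheta)^{-1}[V(n)+V(n+u)]$, provided $\delta>0$. The latter holds because $B$ is finite and $R$ is irreducible and aperiodic, so some state in $B$ can reach another state in $B$ in one step. It then remains to match the constants $M_B$, $\gamma$, $\lambda$, $\hat b$, $v_B$, $\eta$ from Lemma~\ref{lem:quadratic_bound_geometric_ergodicity} with those in the theorem; all are identical by construction except $v_B = \max_{n\in B} V(n)$, which I would bound by using the definition of $B$: for $n \in B$ each coordinate satisfies $r_i^{n_i} \le b/\bigl(f^*_i(\varepsilon^*-\varepsilon)\bigr)$, whence $f^*_i r_i^{n_i} \le b/(\varepsilon^*-\varepsilon)$, and summing over the $M$ coordinates together with the constant term $f_0$ yields $v_B \le f_0 + Mb/(\varepsilon^*-\varepsilon)$, completing the proof.
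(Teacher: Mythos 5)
Your proposal follows the paper's proof essentially step for step: specialize Lemma~\ref{lem:quadratic_bound_geometric_ergodicity_drift} with $v_0=f_0$, $v_i=f^*_i$ to produce the geometric Lyapunov function, verify $|F|_V\le 1$ using a scalar inequality of the form $x\le r^x/c(r)$, and then chain Lemmas~\ref{lem:quadratic_bound_bias_and_norm} and~\ref{lem:quadratic_bound_geometric_ergodicity}. You also correctly supply a step the paper's proof passes over silently, namely that the quantity $v_B=f_0+Mb/(\varepsilon_*-\varepsilon)$ appearing in the theorem is an upper bound for the $v_B=\max_{n\in B}V(n)$ of Lemma~\ref{lem:quadratic_bound_geometric_ergodicity}, using the definition of $B$.

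One point to be careful about: your scalar inequality is the tight one, $\max_{x>0}\, x/r^x = 1/(e\log r)$, which would lead to $f^*_i=\max_k\{f_{k,i},1\}/(e\log r_i)$; the theorem instead states $f^*_i=\max_k\{f_{k,i},1\}/(\log r_i\cdot r_i^{\log r_i})$. These coincide only when $r_i^{\log r_i}=e$, i.e., $r_i=e$, and for $r_i>e$ the paper's $f^*_i$ is actually \emph{smaller} than the true maximum, so the inequality $\max_{x>0}f_i x/r_i^x\le f^*_i$ claimed in the paper's proof (and needed to conclude $|F|_V\le 1$) fails. Your version is the correct one; when you say ``the coefficient $f^*_i$ is chosen precisely to absorb the worst-case \ldots under this inequality'' you are gliding over the fact that your inequality pins down a different (and correct) $f^*_i$ than the one in the statement. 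This is most plausibly a typo in the paper (one would want $r_i^{1/\log r_i}=e$ rather than $r_i^{\log r_i}$), but as written your argument does not literally establish the theorem with its stated constant for $r_i>e$, and you should flag the discrepancy rather than assert an exact match.
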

\begin{proof}
In the proof we only need to show that $| F |_{V} \le 1$. Let
\begin{align}
f_{0} = \max_{k\in K}\left\{ f_{k,0},1 \right\}, \qquad f_{i} = \max_{k\in K}\left\{ f_{k,i},1 \right\},\ \forall i=1,\dots,M.
\end{align}
Then $f_i > 0$, for all $i\in \left\{ 1,\dots,M \right\}$. Moreover, it can be verified that 
\begin{align}
\max_{x > 0} \left\{ \frac{f_ix}{r_i^{x}} \right\} \le \frac{f_i}{\log r_i\cdot r_i^{\log r_i}} = f^*_i. 
\end{align}
Hence, for any $i=1,\dots,M$, $f_i n_i \le f^*_i r_i^{n_i}$ and then $|F|_{V} \le 1$. 
Moreover, since $B\neq \left\{ \mathbf{0} \right\}$ and $R$ has negative drift, 
\begin{align}
\delta = \min_{n\in B} \sum_{\substack{u\in N(n): \\ n+u \in B}} p_{\parind(n),u} > 0.
\end{align}
The result follows immediately from Lemmas~\ref{lem:quadratic_bound_geometric_ergodicity},~\ref{lem:quadratic_bound_bias_and_norm} and~\ref{lem:quadratic_bound_geometric_ergodicity_drift}. 
\end{proof}

We have obtained a geometric bounding function on the bias terms that can be computed based on the parameters of the random walk. However, as will be seen in Section~\ref{sec:quadratic_bound_numerical_experiment}, these bounds are often far from tight. Thus, we can not obtain reasonable error bounds on $\FF$ using these bounds. Therefore, next we show that a random walk with negative drift is also $\mu$-ergodic. Thus, a quadratic bounding function for the bias terms exists.

\subsection{Quadratic bounds on the bias terms}
\label{ssec:quadratic_bound_bias_terms_quadratic}

In this section, we follow the same steps as the previous section. First, in the next lemma we show that a random walk with negative drift is $\mu$-ergodic for any linear function $\mu$. 

\begin{lemma}
\label{lem:quadratic_bound_mu_ergodicity_drift}
Suppose that random walk $R$ is irreducible, aperiodic, positive recurrent and has negative drift. Then, for any function
\begin{align*}
\mu(n) = \mu_0 + \sum_{i=1}^M \mu_i n_i, 
\end{align*}
with $\mu_0 \ge 1$ and $\mu_1,\dots,\mu_M \ge 0$, $R$ is $\mu$-ergodic. More precisely,~\eqref{eq:quadratic_bound_mu_ergodic_condition} holds with
\begin{align*}
& V(n) = \sum_{i=1}^M v_i n_i^2, \qquad v_i = \frac{-\mu^*}{\sup_{n\in S: i\in I(n)}\left\{ s^{+}_{i}(n)-s^{-}_{i}(n) \right\}}, \\
& \mu^* = \max_{i=0,\dots,M}\left\{ \mu_i \right\}, \qquad b = \mu_0 + \sum_{i=1}^{M} v_i, \\
& B = \left\{ n\in S \mid n_i \le \frac{\mu_0+\sum_{i=1}^M v_i}{\mu^*} \right\}.
\end{align*} 
\end{lemma}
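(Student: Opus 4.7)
The plan is to verify the $\mu$-ergodicity drift condition \eqref{eq:quadratic_bound_mu_ergodic_condition} by direct computation with the proposed quadratic $V$. Because transitions are nearest-neighbour, the single-coordinate increments satisfy $(n_i+u_i)^2 - n_i^2 = 2n_iu_i + u_i^2$ with $u_i^2 \in \{0,1\}$, so grouping $u \in N_{c(n)}$ according to the value of $u_i$ yields
\begin{align*}
\sum_{u\in N_{c(n)}} p_{c(n),u}\bigl[V(n+u) - V(n)\bigr]
= \sum_{i=1}^M v_i \Bigl[2n_i\bigl(s^+_i(n)-s^-_i(n)\bigr) + s^+_i(n)+s^-_i(n)\Bigr].
\end{align*}
This is the analogue of \eqref{eq:quadratic_bound_diff} for the quadratic test function and is where all the dimension-wise structure of $V$ enters.

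Next I would exploit the negative drift and the very definition of the constants $v_i$. For $i\in I(n)$, the choice of $v_i$ gives $v_i\bigl(s^+_i(n)-s^-_i(n)\bigr) \le v_i\sup_{n': i\in I(n')}\{s^+_i(n')-s^-_i(n')\} = -\mu^{*}$, so the linear term contributes at most $-2\mu^{*}n_i$; for $i\notin I(n)$ the factor $n_i=0$ kills the linear term altogether. The quadratic part is bounded trivially by $\sum_i v_i$ since $s^+_i(n)+s^-_i(n)\le 1$. Combining, and using $\sum_{i\in I(n)} n_i = \sum_{i=1}^M n_i$, one gets
\begin{align*}
\sum_{u\in N_{c(n)}} p_{c(n),u}\bigl[V(n+u) - V(n)\bigr] \le -2\mu^{*}\sum_{i=1}^M n_i + \sum_{i=1}^M v_i.
\end{align*}

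Finally I would check, case by case in $n$, that this bound is dominated by $-\mu(n) + b\mathbf{1}_B(n)$. Substituting $b = \mu_0 + \sum_i v_i$ and rearranging, the desired inequality is equivalent to
\begin{align*}
\sum_{i=1}^M (2\mu^{*}-\mu_i)n_i \ge b\bigl(1-\mathbf{1}_B(n)\bigr).
\end{align*}
Because $\mu_i \le \mu^{*}$, each coefficient $2\mu^{*}-\mu_i$ is at least $\mu^{*}$. If $n\in B$ the right-hand side vanishes and the inequality is immediate. If $n\notin B$ there exists $i_0$ with $n_{i_0} > b/\mu^{*}$ (in particular $i_0 \in I(n)$), and then $(2\mu^{*}-\mu_{i_0})n_{i_0} \ge \mu^{*} n_{i_0} > b$, so the bound follows. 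Also $V\ge 0$ and $\mu\ge 1$ by construction, so the hypotheses of the $\mu$-ergodicity definition are met.

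I do not expect a real obstacle here: the argument is essentially routine once one notices the identity $(n_i+u_i)^2 - n_i^2 = 2n_iu_i+u_i^2$ and reads off that the sign of the linear part is controlled precisely by $s^+_i(n)-s^-_i(n)$. The only items worth a line of justification are (i) that $v_i$ is well defined and strictly positive, which follows from irreducibility (hence the sup in the denominator is finite and nonzero) together with the negative drift hypothesis (hence the sup is strictly negative), and (ii) that $B$ is finite, which is clear from its definition.
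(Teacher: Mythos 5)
Your proof is correct and follows essentially the same route as the paper: expand $(n_i+u_i)^2-n_i^2 = 2n_iu_i+u_i^2$, regroup by the sign of $u_i$ to produce the linear term controlled by $s_i^+(n)-s_i^-(n)$ and the constant term bounded by $\sum_i v_i$, then invoke negative drift via the definition of $v_i$ and check the resulting inequality on $B$ versus its complement. Your final case-split via the rearranged inequality $\sum_i(2\mu^*-\mu_i)n_i \ge b(1-\mathbf{1}_B(n))$ just makes explicit a verification the paper leaves to the reader, so this is a cleaner write-up of the same argument rather than a different one.
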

\begin{proof}
This proof follows the same approach as the proof of Lemma~\ref{lem:quadratic_bound_geometric_ergodicity_drift}. Thus, some intermediate verification steps are omitted for brevity. 

First, we show that $V(n)$ is well defined. Since $R$ has negative drift, $0 < v_i < \infty$. 
Moreover, for $i=1,\dots,M$, there exists at least one state $n_o$ for which $i\in I(n_0)$. Then, \begin{align}
\label{eq:quadratic_bound_v_j_lower_bound}
v_i \ge \frac{-\mu^*}{s^{+}_{i}(n_0)-s^{-}_{i}(n_0)} \ge 0. 
\end{align}
Therefore, $v_i \ge 0$ for $i=1,\dots,M$ and $V(n)$ is non-negative. 

Next, we verify that~\eqref{eq:quadratic_bound_mu_ergodic_condition} holds. Plugging in the expression for $V(n)$, we have
\begin{align}
\label{eq:quadratic_bound_expr_pmiv}
\sum_{u\in N_{c(n)}} p_{c(n),u}V(n+u) - V(n) = \sum_{i=1}^M\sum_{u\in N_{c(n)}} p_{c(n),u} v_i (n_i+u_i)^{2} - \sum_{i=1}^M v_i n_i^2.
\end{align}
Using~\eqref{eq:quadratic_bound_probability_sum1}, we have
\begin{multline}
\sum_{u\in N_{c(n)}} p_{c(n),u}V(n+u) - V(n) \\
= \sum_{i\in I(n)} \left[ s^{+}_{i}(n) v_i(2n_i + 1) + s^{-}_{i}(n) v_i(-2n_i+1) \right] + \sum_{i\notin I(n)} s^+_i(n)v_i.
\end{multline}
Rewriting the RHS gives that
\begin{align}
\nonumber
& \sum_{u\in N_{c(n)}} p_{c(n),u}V(n+u) - V(n) \\
\nonumber
=& \sum_{i\in I(n)} 2v_i\left[ s^{+}_{i}(n)-s^{-}_{i}(n) \right]n_i + \sum_{i\in I(n)} v_i\left[ s^{+}_{i}(n) + s^{-}_{i}(n) \right] + \sum_{i\notin I(n)} s^+_i(n)v_i \\
\le& -\sum_{i\in I(n)} 2\mu^* n_i + \sum_{i=1}^M v_i,
\end{align}
where the inequality follows from the definition of $v_i$. Hence,
\begin{align}
\sum_{u\in N_{c(n)}} p_{c(n),u}V(n+u) - V(n) + \mu(n) \le \sum_{i\in I(n)} -\mu^* n_i + \mu_0 + \sum_{i=1}^M v_i, 
\end{align}
Observe that RHS has linearly decreasing terms in $n_i$ and one constant part. Moreover, $\mu^* \le 1$ from its definition. Therefore, it can be verified that~\eqref{eq:quadratic_bound_mu_ergodic_condition} holds for the specified $B$ and $R$ is $\mu$-ergodic. 
\end{proof}

In the following theorem we show that the bias terms can be bounded by a quadratic function, which is the other main result of this section. 

\begin{theorem}
\label{thm:quadratic_bound_bound_bias_terms_quadratic}
Suppose that random walk $R$ is irreducible, aperiodic, positive recurrent and has negative drift. Then, for any $C$-linear $F: S\rightarrow [0,\infty)$,
there exists $b_0 < \infty$ such that
\begin{align*}
|D^t_u(n)| \le V(n)+V(n+u) + b_0,
\end{align*}
where 
\begin{align}
& V(n) = \sum_{i=1}^M v_i n_i^2, \\
& f^* = \max_{k=1,\dots,K}\max_{i=0,\dots,M}\left\{ f_{k,i},1 \right\}, \\
& v_i = \frac{-f^*}{\sup_{n\in S: i\in I(n)}\left\{ s^{+}_{i}(n)-s^{-}_{i}(n) \right\}}.
\end{align}
\end{theorem}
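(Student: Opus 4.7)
The plan is to chain the three lemmas that have already been established: Lemma~\ref{lem:quadratic_bound_bias_and_norm} converts bias bounds into $\mu$-total-variation norms, Lemma~\ref{lem:quadratic_bound_mu_ergodicity} provides a quadratic-type bound on the cumulative $\mu$-total-variation norm under $\mu$-ergodicity, and Lemma~\ref{lem:quadratic_bound_mu_ergodicity_drift} guarantees $\mu$-ergodicity for any affine $\mu$ whenever $R$ has negative drift. All three pieces are already available, so the work is mainly a careful choice of $\mu$.

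First I would construct an affine majorant of $F$. Since $F$ is $C$-linear with coefficients $f_{k,0},f_{k,1},\dots,f_{k,M}$, setting
\begin{align*}
\mu(n) = f^* + \sum_{i=1}^M f^* n_i,
\end{align*}
with $f^* = \max_{k,i}\{f_{k,i},1\}$, gives $\mu \ge 1$ and $F(n) \le \mu(n)$ for every $n \in S$, so $|F|_\mu \le 1$. This $\mu$ fits the hypothesis of Lemma~\ref{lem:quadratic_bound_mu_ergodicity_drift} with $\mu_0 = \mu_1 = \dots = \mu_M = f^*$, so $\mu^* = f^*$. Applying that lemma yields $\mu$-ergodicity of $R$ with precisely the quadratic Lyapunov function $V(n) = \sum_{i=1}^M v_i n_i^2$ and coefficients $v_i = -f^*/\sup_{n: i \in I(n)}\{s^+_i(n)-s^-_i(n)\}$ that appear in the theorem statement.

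Having $\mu$-ergodicity in place, I would invoke Lemma~\ref{lem:quadratic_bound_mu_ergodicity} to obtain a finite $b_0$ such that
\begin{align*}
\sum_{k=0}^\infty \| P^k_n - P^k_{n'} \|_\mu \le V(n) + V(n') + b_0
\end{align*}
for all $n, n' \in S$. Specializing to $n' = n + u$ and combining with Lemma~\ref{lem:quadratic_bound_bias_and_norm}, which is applicable because $|F|_\mu \le 1$ by construction, immediately gives $|D^t_u(n)| \le V(n) + V(n+u) + b_0$, which is the claim.

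The core of the argument is really done in the preceding lemmas, so there is no major obstacle left; the only point requiring care is matching conventions, namely confirming that the $\mu$ chosen here produces exactly the same $v_i$ coefficients as in the statement (which it does, since $\mu^* = f^*$) and that the nonnegativity and lower-bound conditions $\mu \ge 1$, $V \ge 0$ required by Lemmas~\ref{lem:quadratic_bound_mu_ergodicity} and~\ref{lem:quadratic_bound_mu_ergodicity_drift} are preserved. As in Theorem~\ref{thm:quadratic_bound_bound_bias_terms_geometric}, no explicit formula for $b_0$ is available, reflecting the fact that Lemma~\ref{lem:quadratic_bound_mu_ergodicity} only guarantees its existence; obtaining $b_0$ in closed form is not expected and motivates the linear programming formulation developed in the next section.
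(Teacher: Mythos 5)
Your proof is correct and follows essentially the same route as the paper: choose an affine majorant $\mu$ of $F$ with $\mu \ge 1$, apply Lemma~\ref{lem:quadratic_bound_mu_ergodicity_drift} to obtain $\mu$-ergodicity with the stated $V$, then chain Lemmas~\ref{lem:quadratic_bound_mu_ergodicity} and~\ref{lem:quadratic_bound_bias_and_norm}. The only (inconsequential) difference is that the paper uses the per-coordinate maxima $\mu_0 = \max\{1, f_0\}$ and $\mu_i = \max_k f_{k,i}$ rather than the uniform coefficient $f^*$ throughout; both choices produce $\mu^* = f^*$ and hence the same $v_i$.
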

\begin{proof}
Let
\begin{align*}
f_{0} = \max_{k=1,\dots,K}\left\{ f_{k,0} \right\}, \qquad f_{i} = \max_{k=1,\dots,K}\left\{ f_{k,i} \right\}, \ \forall i=1,\dots,M,
\end{align*}
and take 
\begin{align*}
\mu(n) = \max\{ 1,f_0\} + \sum_{i=1}^{M} f_i n_i.
\end{align*}
Thus, it is clear that $\mu(n) \ge 1$ and $|F|_\mu \le 1$. The result follows immediately from Lemmas~\ref{lem:quadratic_bound_mu_ergodicity},~\ref{lem:quadratic_bound_bias_and_norm} and~\ref{lem:quadratic_bound_mu_ergodicity_drift}. 
\end{proof}

From Theorems~\ref{thm:quadratic_bound_bound_bias_terms_geometric} and~\ref{thm:quadratic_bound_bound_bias_terms_quadratic}, we see that for a random walk with negative drift the bias terms can be bounded by a quadratic function and by a geometric function. In general, the quadratic bounding function is much tighter than the geometric bounding function. However, the constant $b_0$ of the quadratic function is not known explicitly. 

%

\section{Linear programming for error bounds based on quadratic bounds on the bias terms}
\label{sec:quadratic_bound_linear_program_b0}


As is seen from Section~\ref{ssec:quadratic_bound_bias_terms_quadratic}, the bias terms can be bounded by quadratic functions yet the constant $b_0$ is not known. In this section, we use ideas from~\cite{goseling2016linear} and~\cite{bai2018linear} to formulate a linear program that gives bounds on $\FF$ based on quadratic bounds on $D^t_u(n)$, including an explicit  value for the constant $b_0$. The difference is that in this paper we consider quadratic bounds on the bias terms while in~\cite{goseling2016linear} and~\cite{bai2018linear} linear bounds are considered.

\subsection{Optimization problem for upper bound on $\FF$}

In this section, we review some results from~\cite{bai2018linear}. In particular, we formulate the optimization problem for obtaining an upper bound on $\FF$. From the result of Theorem~\ref{thm:Markov_reward_approach_result}, the following optimization problem comes up naturally to provide an upper bound on $\FF$. In the problem, the variables are $\bar{F}(n)$, $G(n)$, $D^t_u(n)$ and the parameters are $\bar{\pi}(n)$, $F(n)$, $\bar{p}_{c(n),u}$ and $p_{c(n),u}$. 

\begin{problem}[Upper bound] 
\label{pr:extension_lp_minstart}
\begin{align}
\nonumber
\textrm{min}\ &\sum_{n\in S}\left[\bar F(n) + G(n)\right]\bar{\pi}(n), \\
\label{eq:extension_lp_basic_constraint}
\textrm{s.t.}\ &\left| \bar{F}(n) - F(n) +  \sum_{u\in N_{c(n)}} \left( \bar{p}_{c(n),u}-p_{c(n),u} \right)D^t_u(n) \right| \le G(n),\quad \forall n\in S, t\geq 0,  \\
\nonumber
&\bar F(n)\ge 0, G(n)\ge 0, \quad \forall n\in S. 
\end{align}
\end{problem}
Similarly, replacing the objective function with $\textrm{max}\ \sum_{n\in S}\left[\bar F(n) - G(n)\right]\bar{\pi}(n)$ we can obtain a lower bound on $\FF$, but we omit the details here. 

Consider functions $A_u: S\rightarrow [0,\infty)$ and $B_u: S\rightarrow [0,\infty)$ for $u\in N_{c(n)}$, for which
\begin{align}
\label{eq:extenstion_lp_bounds_bias}
-A_u(n) \leq D^t_u(n) \leq B_u(n),
\end{align} 
for all $t\geq 0$. Then, in Problem~\ref{pr:extension_lp_minstart}, replacing $D^t_u(n)$ with the bounding functions, we get rid of the time-dependent terms and obtain the following constraints that guarantee~\eqref{eq:extension_lp_basic_constraint},
\begin{align}
\nonumber
& \bar{F}(n) - F(n) + \sum_{u\in N_{\parind(n)}} \max\left\{\left( \bar{p}_{c(n),u}-p_{c(n),u} \right)B_u(n), -\left( \bar{p}_{c(n),u}-p_{c(n),u} \right)A_u(n)\right\} \\
& \qquad\qquad \le G(n), \\
\nonumber
& F(n) - \bar{F}(n) + \sum_{u\in N_{\parind(n)}} \max\left\{\left( \bar{p}_{c(n),u}-p_{c(n),u} \right)A_u(n), -\left( \bar{p}_{c(n),u}-p_{c(n),u} \right)B_u(n)\right\} \\
& \qquad\qquad \le G(n). 
\end{align}
Besides the constraints given above, additional constraints are necessary to guarantee that~\eqref{eq:extenstion_lp_bounds_bias} holds. Before we present the constraints, we consider a refinement of the partition $C$. Since we consider a $C$-linear function $F$, it is of interest to consider $A_u$ and $B_u$ that have component-wise properties. Then, partition $C$ is not good enough in the sense that for different $n\in C_k$, $n+u$ can be located in different components for any fixed $u\in N_k$. Therefore, we define a refinement of partition $C$ as follows. 

\begin{definition}
\label{def:fine_partition}
Given a finite partition $C$, $\finepar = \left\{ \finepar_j \right\}_{j\in J}$ is called a refinement of $C$ if
\begin{enumerate}
\item $\finepar$ is a finite partition of $S$.
\item For any $j\in J$, any $n\in \finepar_j$ and any $u\in N_j$, $c(n+u)$ depends only on $j$ and $u$, \ie
\begin{align}
c(n+u) = c(n^\prime+u), \quad \forall n,n^\prime \in \finepar_j. 
\end{align}
\end{enumerate}
\end{definition} 

Remark that the refinement of $C$ is not unique. To give more intuition, in the following example we give a refinement of $C$ that is given in Example~\ref{ex:introduction_coarse_partition}.

\begin{example}
\label{ex:introduction_fine_partition}	
In this example, consider the partition $C$ given in Example~\ref{ex:introduction_coarse_partition}. A refinement of $C$ is shown in Figure~\ref{fig:introduction_example_2d_fine}. 
	
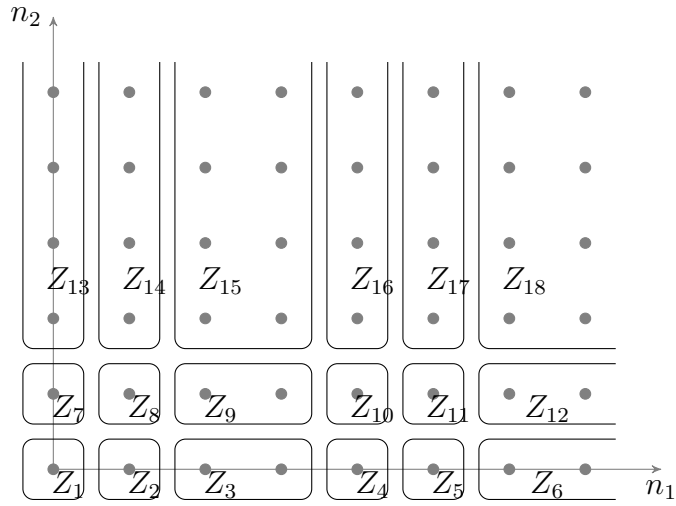
\begin{figure}[h!]
\centering
\begin{tikzpicture}[scale = 1]
		
\foreach \i in {1,...,7}
\foreach \j in {1,...,5}
\filldraw [gray] (\i, \j) circle (2pt);

\foreach \i in {1,...,7}
\filldraw [gray] (\i,0) circle (2pt);
	
\foreach \j in {1,...,5}
\filldraw [gray] (0, \j) circle (2pt);
	
\filldraw [gray] (0, 0) circle (2pt);

\draw [->, >=stealth', gray] (0,0) -- (0,6) node[left, black, thick] {$n_2$};
\draw [->, >=stealth', gray] (0,0) -- (8,0) node[below, black, thick] {$n_1$};
		
\draw [rounded corners] (-0.4,-0.4) rectangle (0.4,0.4) node at (0.2,-0.2) {$\finepar_1$};

\draw [rounded corners] (0.6,-0.4) rectangle (1.4,0.4) node at (1.2,-0.2) {$\finepar_2$};

\draw [rounded corners] (1.6,-0.4) rectangle (3.4,0.4) node at (2.2,-0.2) {$\finepar_3$};

\draw [rounded corners] (3.6,-0.4) rectangle (4.4,0.4) node at (4.2,-0.2) {$\finepar_4$};

\draw [rounded corners] (4.6,-0.4) rectangle (5.4,0.4) node at (5.2,-0.2) {$\finepar_5$};

\draw [rounded corners] (7.4,-0.4) -- (5.6,-0.4) -- (5.6,0.4) -- (7.4,0.4) node at (6.5,-0.2) {$\finepar_6$};
		
\draw [rounded corners] (-0.4,0.6) rectangle (0.4,1.4) node at (0.2,0.8) {$\finepar_7$};
		
\draw [rounded corners] (0.6,0.6) rectangle (1.4,1.4) node at (1.2,0.8) {$\finepar_8$};
	
\draw [rounded corners] (1.6,0.6) rectangle (3.4,1.4) node at (2.2,0.8) {$\finepar_9$};

\draw [rounded corners] (3.6,0.6) rectangle (4.4,1.4) node at (4.2,0.8) {$\finepar_{10}$};

\draw [rounded corners] (4.6,0.6) rectangle (5.4,1.4) node at (5.2,0.8) {$\finepar_{11}$};

\draw [rounded corners] (7.4,0.6) -- (5.6,0.6) -- (5.6,1.4) -- (7.4,1.4) node at (6.5,0.8) {$\finepar_{12}$};
		
\draw [rounded corners] (-0.4,5.4) -- (-0.4,1.6) -- (0.4,1.6) -- (0.4,5.4) node at (0.2,2.5) {$\finepar_{13}$};
		
\draw [rounded corners] (0.6,5.4) -- (0.6,1.6) -- (1.4,1.6) -- (1.4,5.4) node at (1.2,2.5) {$\finepar_{14}$};
		
\draw [rounded corners] (1.6,5.4) -- (1.6,1.6) -- (3.4,1.6) -- (3.4,5.4) node at (2.2,2.5) {$\finepar_{15}$};

\draw [rounded corners] (3.6,5.4) -- (3.6,1.6) -- (4.4,1.6) -- (4.4,5.4) node at (4.2,2.5) { $\finepar_{16}$};
		
\draw [rounded corners] (4.6,5.4) -- (4.6,1.6) -- (5.4,1.6) -- (5.4,5.4) node at (5.2,2.5) {$\finepar_{17}$};
		
\draw [rounded corners] (5.6,5.4) -- (5.6,1.6) -- (7.4,1.6) node at (6.2,2.5) {$\finepar_{18}$};
\end{tikzpicture}
\caption{A refinement of $C$ as considered in Example~\ref{ex:introduction_coarse_partition}.}
\label{fig:introduction_example_2d_fine}
\end{figure}
\end{example}

In~\cite{bai2018linear}, it is shown that there exist constants $\phi_{z(n),u,d,v} \ge 0$ such that 
\begin{align}
D^{t+1}_u(n) = F(n+u) - F(n) + \sum_{d\in N_{c(n)}\cup u+N_{c(n+u)}} \sum_{v\in N_{c(n+u)}} \phi_{z(n),u,d,v}D^t_v(n+d). 
\end{align}
Moreover, a linear program is formulated to obtain $\phi_{z(n),u,d,v}$. Then, the following inequalities are sufficient conditions for $-A_u(n)$ and $B_u(n)$ to be a lower and upper bound on $D^t_u(n)$, respectively,
\begin{align}
F(n+u) - F(n) + \sum_{d\in N_{c(n)}\cup u+N_{c(n+u)}} \sum_{v\in N_{c(n+u)}} \phi_{z(n),u,d,v}B_v(n+d) &\le B_u(n), \\
F(n+u) - F(n) - \sum_{d\in N_{c(n)}\cup u+N_{c(n+u)}} \sum_{v\in N_{c(n+u)}} \phi_{z(n),u,d,v}A_v(n+d) &\le -A_u(n).
\end{align}

Therefore, summarizing the discussion above, the following problem gives an upper bound on $\FF$. In the problem, the variables are $\bar{F}(n)$, $G(n)$, $A_u(n)$, $B_u(n)$ and the parameters are $\bar{\pi}(n)$, $F(n)$, $\bar{p}_{c(n),u}$, $p_{c(n),u}$, $\phi_{z(n),u,d,v}$. 

\begin{problem} 
\label{pr:extension_lp_minlp}
\begin{align}
\nonumber
\textrm{min}\ & \sum_{n\in S}\left[\bar F(n) + G(n)\right]\bar{\pi}(n), \\
\nonumber
\textrm{s.t.}\ & \bar{F}(n) - F(n) + \sum_{u\in N_{\parind(n)}} \max\left\{\left( \bar{p}_{c(n),u}-p_{c(n),u} \right)B_u(n), -\left( \bar{p}_{c(n),u}-p_{c(n),u} \right)A_u(n)\right\} \\
\label{eq:extension_lp_error_bound_upper}
& \qquad\qquad \le G(n), \\
\nonumber
& F(n) - \bar{F}(n) + \sum_{u\in N_{\parind(n)}} \max\left\{\left( \bar{p}_{c(n),u}-p_{c(n),u} \right)A_u(n), -\left( \bar{p}_{c(n),u}-p_{c(n),u} \right)B_u(n)\right\} \\
\label{eq:extension_lp_error_bound_lower}
& \qquad\qquad \le G(n), \\
\label{eq:extension_lp_bias_term_upper}
& F(n+u) - F(n) + \sum_{d\in N_{c(n)}\cup u+N_{c(n+u)}} \sum_{v\in N_{c(n+u)}} \phi_{z(n),u,d,v}B_v(n+d) \le B_u(n), \\
\label{eq:extension_lp_bias_term_lower}
& F(n) - F(n+u) + \sum_{d\in N_{c(n)}\cup u+N_{c(n+u)}} \sum_{v\in N_{c(n+u)}} \phi_{z(n),u,d,v}A_v(n+d) \le A_u(n), \\
\nonumber
& A_u(n)\ge 0, B_u(n)\ge 0, \bar F(n)\ge 0, G(n)\ge 0, \quad \text{for }n\in S, u\in N_{c(n)}. 
\end{align}
\end{problem}

The problem has countably infinite variables and constraints. In the next section, we will show that by considering component-wise quadratic $A_u$ and $B_u$, we can formulate a linear program whose feasible set is a subset of the feasible set of Problem~\ref{pr:extension_lp_minlp}.

\subsection{Linear program for upper bounds on $\FF$}

Consider $A_u(n)$ and $B_u(n)$ that are component-wise quadratic with respect to partition $\parti$, \ie
\begin{align}
A_u(n) =& \sum_{k=1}^K \mathbf{1}\left( n\in C_k \right) \left(a_{c(n),u,0} + \sum_{i=1}^M \left( a_{c(n),u,i}n_i + \alpha_{c(n),u,i}n_i^2 \right) \right), \\
B_u(n) =& \sum_{k=1}^K \mathbf{1}\left( n\in C_k \right) \left( b_{c(n),u,0} + \sum_{i=1}^M \left( b_{c(n),u,i}n_i + \beta_{c(n),u,i}n_i^2 \right) \right).
\end{align}
Let $\finepar = \left\{ \finepar_j \right\}_{j\in J}$ be a refinement of partition $\parti$. Moreover, we consider a $\finepar$ in which all the bounded components have only one state. For instance, the partition in Example~\ref{ex:introduction_fine_partition} is not such a partition since $\finepar_3$ and $\finepar_9$ have two states. The reason for considering this type of refinement will be discussed after presenting Lemma~\ref{lem:quadratic_bound_equivalence_constraint}. For any $\finepar_j$ and $i=1,\dots,M$ define $L_{j,i}$, $U_{j,i}$, $I(\finepar_j)$ and $\partial\finepar_j$ as
\begin{align}
& L_{j,i} = \min_{n\in \finepar_j} n_i, \qquad U_{j,i} = \sup_{n\in \finepar_j} n_i, \\
& I(\finepar_j) = \left\{ i=1,\dots,M \mid U_{j,i} = \infty \right\}, \\
& \partial\finepar_j = \left\{ n\in \finepar_j \mid n_i = L_{j,i}, \ \forall i\in I(\finepar_j), \quad n_k \in \left\{ L_{j,k},U_{j,k} \right\}, \ \forall k\notin I(\finepar_j) \right\}
\end{align}
Intuitively, $I(\finepar_j)$ contains the dimension in which $\finepar_j$ is unbounded. Then, for any $\finepar_j$, if $i \notin I(\finepar_j)$, then $L_{j,i} = U_{j,i}$. Hence, 
\begin{align}
\partial\finepar_j = \left\{ n\in \finepar_j \mid n_i = L_{j,i} ,\ \forall i=1,\dots,M \right\}.
\end{align}

Let $c(j,u)$ denote the index of the component in partition $C$ that $n+u$ is located in for any $n\in \finepar_j$, and
\begin{align}
N_{j,u} = N_j \cup \left( u+N_{c(j,u)} \right). 
\end{align}
Next, we show that by restricting our attention to a $C$-linear $\bar{F}$ and component-wise quadratic $G$, $A_u$ and $B_u$, we can formulate a linear problem with a finite number of variables and constraints, whose feasible set is a subset of the feasible set of Problem~\ref{pr:extension_lp_minlp}. We refer to the problem as the restricted problem. 



Since $A_u$ and $B_u$ are component-wise quadratic with respect to partition $C$, we can verify that the constraints in Problem~\ref{pr:extension_lp_minlp} have the form $H(n)\le 0$ where $H(n)$ is component-wise quadratic with respect to partition $\finepar$. Therefore, we present the following lemma, in which we write sufficient conditions for $H(n)\le 0$ in terms of the coefficients of $H$. 

\begin{lemma}
\label{lem:quadratic_bound_equivalence_constraint}
Suppose that 
\begin{align}
H(n) = \sum_{j\in J} \mathbf{1}\left( n\in \finepar_j \right) \left(h_{\fineparind(n),0} + \sum_{i=1}^M \left( h_{\fineparind(n),i}n_i + \eta_{\fineparind(n),i}n_i^2 \right) \right)
\end{align}
Then, $H(n) \le 0$ for all $n\in \finepar_j$ if
\begin{align}
\label{eq:quadratic_bound_z_linear_negative_coeff}
& \eta_{j,i} \le 0, \qquad 2L_{j,i}\eta_{j,i} + h_{j,i} \le 0, \quad \forall i\in I(\finepar_j), \\
& H(n) \le 0, \quad \forall n \in \partial\finepar_j. 
\end{align}
\end{lemma}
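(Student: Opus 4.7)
The plan is to show that on each component $\finepar_j$, the quadratic $H|_{\finepar_j}$ attains its supremum at a point of $\partial\finepar_j$; the third hypothesis $H\le 0$ on $\partial\finepar_j$ then closes the argument. The key structural observation, recorded just before the lemma statement, is that $L_{j,i}=U_{j,i}$ whenever $i\notin I(\finepar_j)$, so in every bounded dimension the coordinate $n_i$ is constant across $\finepar_j$; only the coordinates indexed by $I(\finepar_j)$ can vary within $\finepar_j$.

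Fix $n\in \finepar_j$ and let $n^*$ be the element of $\partial\finepar_j$ with $n^*_i=L_{j,i}$ for all $i$, so that $n^*$ agrees with $n$ in every bounded dimension. Since $n$ and $n^*$ lie in the same component, the coefficients $h_{j,i}$ and $\eta_{j,i}$ are the same for both, and contributions from indices $i\notin I(\finepar_j)$ cancel in the difference. A direct computation yields
\begin{equation*}
H(n)-H(n^*) = \sum_{i\in I(\finepar_j)} \bigl[\, h_{j,i}(n_i-L_{j,i}) + \eta_{j,i}(n_i^2-L_{j,i}^2)\,\bigr].
\end{equation*}
Writing $t_i=n_i-L_{j,i}\ge 0$ and expanding $n_i^2-L_{j,i}^2=t_i^2+2L_{j,i}t_i$, each summand becomes $(h_{j,i}+2L_{j,i}\eta_{j,i})\,t_i + \eta_{j,i}\,t_i^2$. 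The two coefficient hypotheses of the lemma force both $h_{j,i}+2L_{j,i}\eta_{j,i}$ and $\eta_{j,i}$ to be non-positive; together with $t_i\ge 0$ this makes every summand $\le 0$, so $H(n)\le H(n^*)\le 0$.

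I do not expect any substantial obstacle. The argument is essentially the one-dimensional fact that a parabola $x\mapsto h x+\eta x^2$ whose slope at the left endpoint $L$ is non-positive and whose leading coefficient is non-positive must be non-increasing on $[L,\infty)$, applied coordinate by coordinate by virtue of the separability of $H$ in the $n_i$. The only point that requires care is the bookkeeping for the bounded dimensions, and this is absorbed by the equality $L_{j,i}=U_{j,i}$ for $i\notin I(\finepar_j)$, which collapses $H(n)-H(n^*)$ to a sum over the unbounded directions only.
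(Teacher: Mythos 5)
Your proof is correct and follows essentially the same route as the paper's: the paper's proof observes that the two coefficient conditions make $h_{j,i}n_i+\eta_{j,i}n_i^2$ monotonically non-increasing on $[L_{j,i},\infty)$ for each $i\in I(\finepar_j)$, which you simply make fully explicit via the substitution $t_i=n_i-L_{j,i}$ and the resulting non-positive expansion of $H(n)-H(n^*)$.
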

\begin{proof}
We can check that the constrains in~\eqref{eq:quadratic_bound_z_linear_negative_coeff} ensures that $h_{\fineparind(n),i}n_i + \eta_{\fineparind(n),i} n_i^2$ is monotonically decreasing in $n_i$ on $[L_{j,i},\infty)$ for all $i\in I(\finepar_j)$. Therefore, $H(n) \le 0$ at all the corners $n$ guarantees that $H(n) \le 0$ for all $n\in \finepar_j$. 
\end{proof}

Suppose that a bounded component $\finepar_j$ has more than one state, \ie there exists $i\in \left\{1,\dots,M\right\}$ such that $L_{j,i} < M_{j,i}$. Then, to obtain sufficient constraints such that $H(n)\le 0$ for all $n\in \finepar_j$, we need to find sufficient constraints on $h_{\fineparind(n),i}$, $\eta_{\fineparind(n),i}$ such that $h_{\fineparind(n),i}n_i + \eta_{\fineparind(n),i} n_i^2 \le 0$ for $n_i \in [L_{j,i},M_{j,i}]$. We can require $h_{\fineparind(n),i}n_i + \eta_{\fineparind(n),i} n_i^2$ to be monotonically decreasing and non-positive at $n_i = L_{j,i}$. However, these constraints are often too strong. Therefore, we suppose that $\finepar_j$ has only one state and hence $H(n)\le 0$ for all $n\in \finepar_j$ can be reduced to only one constraint. 


It is easy to check that the objective function of Problem~\ref{pr:extension_lp_minlp} can be reduced to a linear function in terms of the coefficients of $\bar{F}$ and $G$ when $\bar{\pi}$ is product-form. Therefore, we give the main result of this section in the following theorem. 

\begin{theorem}
Suppose that $\bar{F}$ is $C$-linear and $G$, $A_u$ and $B_u$ are component-wise quadratic with respect to partition $C$. Then, a restricted linear problem for Problem~\ref{pr:extension_lp_minlp} can be formulated, which provides an upper bound on $\FF$ and has a finite number of variables and constraints. 
\end{theorem}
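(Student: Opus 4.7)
The plan is to show that the three requirements—(i) restricting the feasible set of Problem~\ref{pr:extension_lp_minlp}, (ii) inheriting the upper bound property, and (iii) reducing both the variables and the constraints to finitely many linear entities—can each be discharged using the structural assumptions together with Lemma~\ref{lem:quadratic_bound_equivalence_constraint}. I would first reduce the decision variables. Since $\bar{F}$ is $C$-linear it is determined by $(M+1)|K|$ scalars $\bar{f}_{k,0},\bar{f}_{k,1},\dots,\bar{f}_{k,M}$, and each of $G$, $A_u$, $B_u$ being component-wise quadratic with respect to $C$ is determined by $(2M+1)$ scalars per component (plus one copy of $A_u,B_u$ per direction $u$, of which there are at most $3^M$). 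This already collapses the uncountably many variables of Problem~\ref{pr:extension_lp_minlp} to a finite list of real coefficients.

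Next I would reduce the constraints. Fix a refinement $Z=\{Z_j\}_{j\in J}$ of $C$ in which every bounded component is a singleton (such a refinement exists because $J$ is finite and we may split bounded components into their constituent states). Each of the constraints~\eqref{eq:extension_lp_error_bound_upper}--\eqref{eq:extension_lp_bias_term_lower}, after substituting the parameterized forms of $\bar{F},G,A_u,B_u$ and using that $F$ is $C$-linear, has the form $H(n)\le 0$ where $H$ is component-wise quadratic with respect to $Z$, with coefficients that are linear functions of the decision variables. Apply Lemma~\ref{lem:quadratic_bound_equivalence_constraint} component by component: for each unbounded component $Z_j$ and each unbounded coordinate $i\in I(Z_j)$ we obtain two linear inequalities on the quadratic and linear coefficients of $H$ restricted to $Z_j$, and we add one inequality $H(n)\le 0$ at each $n\in\partial Z_j$. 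Because $|J|$ is finite and each $\partial Z_j$ is finite, this yields only finitely many linear constraints on the coefficients.

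The "max" terms in~\eqref{eq:extension_lp_error_bound_upper} and~\eqref{eq:extension_lp_error_bound_lower} are handled in the usual way: replace each $\max\{X,Y\}\le G(n)$ by the pair $X\le G(n)$ and $Y\le G(n)$. This preserves the feasible set while keeping everything linear in the decision variables, and the two resulting families again have the component-wise quadratic form to which Lemma~\ref{lem:quadratic_bound_equivalence_constraint} applies. The objective $\sum_n [\bar{F}(n)+G(n)]\bar{\pi}(n)$ becomes a linear functional of the finitely many coefficients, since for product-form $\bar\pi$ the moments $\sum_n n_i\bar{\pi}(n)$ and $\sum_n n_i^2\bar{\pi}(n)$ (restricted to each component of $C$) are finite constants that can be precomputed. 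Item~(ii) is then immediate: every coefficient vector that is feasible for the restricted program induces $(\bar{F},G,A_u,B_u)$ that satisfy all constraints of Problem~\ref{pr:extension_lp_minlp} pointwise, so by Theorem~\ref{thm:Markov_reward_approach_result} the restricted optimum upper-bounds $\FF$.

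The main obstacle I anticipate is the \textbf{bookkeeping} of the quadratic-to-linear reduction in~\eqref{eq:extension_lp_bias_term_upper}--\eqref{eq:extension_lp_bias_term_lower}: the left-hand side mixes $F(n+u)-F(n)$, which is $C$-linear but involves shifts that can cross component boundaries of $C$, with a sum of component-wise quadratic $B_v(n+d)$ (resp.\ $A_v(n+d)$) evaluated at shifted arguments. This is exactly why we introduced the refinement $Z$: on each $Z_j$ the component indices $c(n+u)$, $c(n+d)$ and $c(n+d+v)$ are constant, so after expanding $(n_i+u_i)^2$ etc.\ the result is genuinely component-wise quadratic on $Z_j$ with coefficients that are \emph{linear} in the decision variables, and Lemma~\ref{lem:quadratic_bound_equivalence_constraint} applies directly. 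Assembling all the restricted constraints across the finitely many components of $Z$ and across $u\in N_{c(n)}$ then completes the formulation, and the finite cardinality of both the variable set and the constraint set follows by construction.
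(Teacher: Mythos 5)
Your proposal is correct and follows essentially the same route as the paper: both rely on Lemma~\ref{lem:quadratic_bound_equivalence_constraint} to collapse the pointwise component-wise-quadratic constraints on a singleton-bounded refinement $\finepar$ into finitely many linear inequalities in the coefficients, count the finitely many coefficients of $\bar{F},G,A_u,B_u$, and observe that the restricted feasible set is a subset of that of Problem~\ref{pr:extension_lp_minlp} so the optimum still upper-bounds $\FF$. Your write-up is somewhat more explicit than the paper about splitting the $\max$ constraints, precomputing moments of $\bar\pi$, and why the refinement makes $c(n+u)$, $c(n+d)$, $c(n+d+v)$ constant per component, but the underlying argument is the same.
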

\begin{proof}
From Lemma~\ref{lem:quadratic_bound_equivalence_constraint}, we see that a restricted problem can be formulated, in which the constraints and objective function are all linear in the variables, \ie the coefficients of $F$, $\bar{F}$, $G$, $A_u$ and $B_u$. Moreover, the feasible set of the restricted problem is a subset of the feasible set of Problem~\ref{pr:extension_lp_minlp}. Hence, the restricted problem gives an upper bound on $\FF$. Next, we show that the restricted problem has a finite number of variables and constraints. 
	
Since $\bar{F}$ is $C$-linear, $G$, $A_u$ and $B_u$ are component-wise quadratic with respect to $C$, the total number of coefficients is at most $2\left|K\right|(3^M+1)(2M+1)$. Moreover, for each component $\finepar_j$, there is only one state in $\partial\finepar_j$ and at most $M$ unbounded dimensions. Hence, for each constraint in Problem~\ref{pr:extension_lp_minlp}, we formulate at most $2\left|J\right|(3^M+1)(2M+1)$ constraints in the restricted problem. Thus, the number of constraints is finite. 
\end{proof}

In this section, we have formulated a linear program to obtain the error bound based on quadratic $A_u$ and $B_u$. Comparing the result in this section and that in~\cite{bai2018linear}, we see that if we consider $\bar{F}$, $G$, $A_u$ and $B_u$ to be $C$-linear, we can formulate sufficient and necessary conditions for the constraints in Problem~\ref{pr:extension_lp_minlp}. If we consider the functions to be component-wise quadratic, we can only formulate sufficient conditions for the constraints. A final remark is that the restricted linear program we formulate does not require that $R$ has negative drift.

\section{Numerical experiments}
\label{sec:quadratic_bound_numerical_experiment}

From the results of Sections~\ref{sec:quadratic_bound_bias_term_bound} and~\ref{sec:quadratic_bound_linear_program_b0}, for a random walk with negative drift, on one hand, the bias terms are bounded by explicit geometric functions. On the other hand, quadratic bounds exist but we can only obtain them numerically in some cases. In this section, we implement the linear program based on quadratic bounds in Python and consider two examples. We compare the obtained bounds on $\FF$ based on linear, quadratic and geometric bounds on the bias terms.

\subsection{Two-node tandem system with boundary speed-up or slow-down}
\label{ssec:quadratic_bound_numerical_tandem2d}

In the first example, we consider a random walk with negative drift and compare bounds on $\FF$ obtained through geometric and quadratic bounds on the bias terms. 

Consider a tandem system containing two nodes. Every job arrives at Node 1 and then goes to Node 2 to complete its service. In every node jobs are served by the First-In-First-Out discipline. In the end, a job leaves the system through Node 2. Let $\lambda$ denote the arrival rate. Suppose that Node $1$ and Node $2$ have service rates $\mu_1$ and $\mu_2$, respectively. For Node 1, the service rate changes to $\mu_1^*$ if Node 2 becomes empty. Remark that the job in the server is also included for the number of jobs in a node. The diagram of the system is given in Figure~\ref{fig:quadratic_bound_diagram_tandem2d}. 

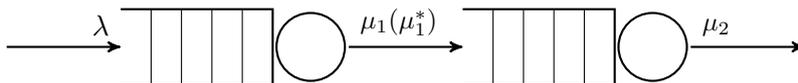
\begin{figure}[!ht]
\centering
\begin{tikzpicture}
[scale = 0.5, cross/.style={path picture={ 
	\draw[black]
	(path picture bounding box.south east) -- (path picture bounding box.north west) (path picture bounding box.south west) -- (path picture bounding box.north east);
}}]
		
\draw [thick] (-7,2) -- ++(4,0) -- ++(0,2) -- ++(-4,0);
\foreach \i in {1,...,4}
\draw (-3-\i*0.8,4) -- (-3-\i*0.8,2);
\node [draw, thick, circle, minimum width=0.9 cm] at (-2,3){};
\draw [->, >=stealth', thick] (-10,3) -- ++(3,0) node[above left] {\small $\lambda$};
\draw [->, >=stealth', thick] (-1,3) node[above right] {\footnotesize $\mu_1(\mu_1^*)$} -- ++(3,0);
		
\draw [thick] (2,2) -- ++(4,0) -- ++(0,2) -- ++(-4,0);
\foreach \i in {1,...,4}
\draw (6-\i*0.8,4) -- (6-\i*0.8,2);
\node [draw, thick, circle, minimum width=0.9 cm] at (7,3){};
\draw [->, >=stealth', thick] (8,3) node[above right] {\footnotesize $\mu_2$} -- ++(3,0);

\end{tikzpicture}
\caption{Diagram of the tandem queueing system}
\label{fig:quadratic_bound_diagram_tandem2d}
\end{figure}

In this example, we have $S = \left\{ 0,1,\dots \right\}^2$ with the partition $C_1 = \left\{ 1,2,\dots \right\} \times \left\{ 0 \right\}$, $C_2 = \left\{ 0 \right\} \times \left\{ 1,2,\dots \right\}$, $C_2 = \left\{ 0 \right\} \times \left\{ 0 \right\}$ and $C_1 = \left\{ 1,2,\dots \right\} \times \left\{ 1,2,\dots \right\}$. Note that the tandem system we use is a continuous-time system. Then, the uniformization method introduced in~\cite{grassmann1977transient} can be used to transfer the continuous-time tandem system into a discrete-time $R$. Without loss of generality, assume that $\lambda + \max\{\mu_1,\mu^*\} + \mu_2 \le 1$ and we use the uniformization constant $\gamma = 1$. Thus, a discrete-time random walk $R$ can be obtained with the non-zero transition probabilities
\begin{align*}
p_{k,(1,0)} =&\ \lambda, \qquad p_{k,(0,-1)} =\mathbf{1}\left( (0,-1)\in N_k \right)\mu_2, \ \forall k=1,2,3,4, \\
p_{k,(-1,1)} =& 
\begin{cases}
\mu^*, &\ \textrm{if } k=1, \\
\mu_1, &\ \textrm{if } k=4, 
\end{cases} \\
p_{k,(0,0)} =&\ 1 - \sum_{u\in N_k: u\neq(0,0)} p_{k,u}. 
\end{align*}
For the stability of the system, assume that $\lambda/\mu_i < 1$ for $i=1,2$. Assume that $\lambda < \mu^*$ and $\mu_1 < \mu_2$. Thus, $R$ has negative drift. 

\subsubsection*{The perturbed random walk}

The non-zero transition probabilities of the perturbed random walk are 
\begin{align*}
p_{k,(1,0)} =&\ \lambda, \qquad p_{k,(0,-1)} =\mathbf{1}\left( (0,-1)\in N_k \right)\mu_2,  \\
p_{k,(-1,1)} =&\ \mathbf{1}\left( (-1,1)\in N_k \right)\mu_1, \quad 
p_{k,(0,0)} = 1 - \sum_{u\in N_k: u\neq(0,0)} p_{k,u}, \ \forall k=1,2,3,4.
\end{align*}
Hence,
\begin{align*}
\bar{\pi}(n) = (1-\rho_1)(1-\rho_2)\rho_1^{n_1}\rho_2^{n_2}, 
\end{align*}
where $\rho_i = \lambda/\mu_i$ for $i=1,2$. In Figure~\ref{fig:quadratic_bound_tandem2d_transition}, the transition structures of the original and the perturbed random walks are shown. The perturbed transition is marked with dashed lines. 

\begin{figure}[htb!]
\subfigure[Transition structure of $R$]{
\begin{tikzpicture}[scale = 0.45]
\draw [->, >=stealth'] (0,0) -- (0,8) node[above, thick] {$n_2$};
\draw [->, >=stealth'] (0,0) -- (10,0) node[right, thick] {$n_1$};

\draw [->, very thick] (7,5) -- ++(-1,1) node[above, thick] {\footnotesize $\mu_1$};
\draw [->, very thick] (7,5) -- ++(0,-1) node[left, thick] {\footnotesize $\mu$}; 	
\draw [->, very thick] (7,5) -- ++(1,0) node[right, thick] {\footnotesize $\lambda$};

\draw [->, very thick] (7,0) -- ++(-1,1) node[above, thick] {\footnotesize $\mu_1^*$};
\draw [->, very thick] (7,0) -- ++(1,0) node[above, thick] {\footnotesize $\lambda$};

\draw [->, very thick] (0,5) -- ++(1,0) node[right, thick] {\footnotesize $\lambda$};
\draw [->, very thick] (0,5) -- ++(0,-1) node[below left, thick] {\footnotesize $\mu_2$};

\draw [->, very thick] (0,0) -- ++(1,0) node[above, thick] {\footnotesize $\lambda$};
\end{tikzpicture}	
}
\subfigure[Transition structure of $\bar{R}$]{
\begin{tikzpicture}[scale = 0.45]
\draw [->, >=stealth'] (0,0) -- (0,8) node[above, thick] {$n_2$};
\draw [->, >=stealth'] (0,0) -- (10,0) node[right, thick] {$n_1$};

\draw [->, very thick] (7,5) -- ++(-1,1) node[above, thick] {\footnotesize $\mu_1$};
\draw [->, very thick] (7,5) -- ++(0,-1) node[left, thick] {\footnotesize $\mu$}; 	
\draw [->, very thick] (7,5) -- ++(1,0) node[right, thick] {\footnotesize $\lambda$};

\draw [->, very thick, dashed] (7,0) -- ++(-1,1) node[above, thick] {\footnotesize $\mu_1$};
\draw [->, very thick] (7,0) -- ++(1,0) node[above, thick] {\footnotesize $\lambda$};

\draw [->, very thick] (0,5) -- ++(1,0) node[right, thick] {\footnotesize $\lambda$};
\draw [->, very thick] (0,5) -- ++(0,-1) node[below left, thick] {\footnotesize $\mu_2$};

\draw [->, very thick] (0,0) -- ++(1,0) node[above, thick] {\footnotesize $\lambda$};
\end{tikzpicture}	
}
\caption{Transition structures of the original and perturbed random walks. }
\label{fig:quadratic_bound_tandem2d_transition}
\end{figure}
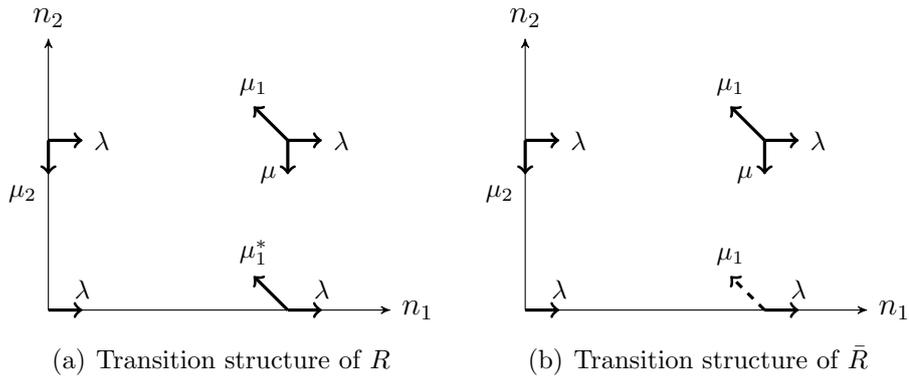

Let $F(n) = n_1$. Take $\mu_1^*=\mu_2 = 2\mu_1$ and consider various values for $\lambda/\mu_1$. 

For the explicit geometric bounds on the bias terms, we use the result in Theorem~\ref{thm:quadratic_bound_bound_bias_terms_geometric} and we have
{\small
\begin{align*}
& 1 < r_1 < \max\left\{ \mu_1^*/\lambda, \mu_1/\lambda \right\}, \quad 1 < r_2 < \left\{ \mu_2/\mu_1 \right\}, \quad f_1^* = f_2^*  = 1/2, \\
& \varepsilon^* = \min \left\{ \lambda(1-r_1)+\mu_1(1-r_1^{-1}), \lambda(1-r_1)+\mu_1^*(1-r_1^{-1}), \mu_1(1-r_2)+\mu_2(1-r_2^{-1}) \right\}, \\
& V(n) = \frac{r_1^{n_1}+r_2^{n_2}}{2}, \quad B = \left\{ (0,0),(1,0),(0,1),(1,1) \right\}, \quad b = \lambda/2(r_1-1)+\varepsilon.
\end{align*}
}
Unfortunately, no matter how we choose $r_1$, $r_2$ and $\varepsilon$, we get the bounds
\begin{align*}
\left| D^t_u(n) \right| \le C\left[ V(n)+V(n+u) \right], 
\end{align*}
where $C \ge 10^{20}$. Thus, the resulting error bound is too large to give meaningful information. 

For the quadratic bounding function on the bias terms, we implement the restricted linear program given in Section~\ref{sec:quadratic_bound_linear_program_b0} to obtain upper and lower bounds on $\FF$. In Figure~\ref{fig:quadratic_bound_tandem2d_load_num_first_queue}, these bounds are shown for various $\lambda/\mu_1$. We use $\FF^{(q)}_u$ and $\FF^{(q)}_l$ to denote the upper and lower bounds given by the restricted problem, respectively. Moreover, we include upper and lower bounds given by the linear program from~\cite{bai2018linear} by considering linear bounds on $D^t_u(n)$, denoted by $\FF_u$ and $\FF_l$ respectively. 

Moreover, since in this case $\mu_1^* \ge \mu_1$, using the comparison result in Theorem~\ref{thm:comparison_upper} we see that $\bar{\FF}$ provides an upper bound for $\FF$. On the contrary, if $\mu_1^* < \mu_1$ then $\bar{\FF}$ provides a lower bound for $\FF$. The comparison upper or lower bound is denoted by $\FF^{(c)}_u$ or $\FF^{(c)}_l$. 


\begin{figure}[htb!]
\centering
\begin{tikzpicture}[scale=1]
\begin{axis}[
xlabel=$\lambda/\mu_1$,ylabel=$\FF$, 
xmin = 0, xmax = 1,
ymin = 0, ymax = 10,
font=\scriptsize,
legend style={
	cells={anchor=west},
	legend pos=north west,
	font=\scriptsize,
}
]

\addplot[
mark=none,line width=.3mm, color=blue,
]
table[
header=false,x index=0,y index=2,
]
{matlab/Quadratic_bound/tandem2d_load_num_first_queue.csv};
\addlegendentry{$\FF^{(q)}_u$};

\addplot[
mark=none,line width=.3mm, color=blue, dashed, smooth
]
table[
header=false,x index=0,y index=1,
]
{matlab/Quadratic_bound/tandem2d_load_num_first_queue.csv};
\addlegendentry{$\FF^{(q)}_l$};

\addplot[
mark=square*,line width=.3mm, color=green,
]
table[
header=false,x index=0,y index=3,
]
{matlab/Quadratic_bound/tandem2d_load_num_first_queue.csv};
\addlegendentry{$\FF^{(c)}_u$};

\addplot[
mark=*,line width=.3mm, color=red,
]
table[
header=false,x index=0,y index=5,
]
{matlab/Quadratic_bound/tandem2d_load_num_first_queue.csv};
\addlegendentry{$\FF_u$};

\addplot[
mark=*,line width=.3mm, color=red, dashed
]
table[
header=false,x index=0,y index=4, 
]
{matlab/Quadratic_bound/tandem2d_load_num_first_queue.csv};
\addlegendentry{$\FF_l$};
\end{axis}     
\end{tikzpicture}
\caption{Bounds on $\FF$ for various $\lambda/\mu_1$: $F(n) = n_1$, $\mu_1^*=\mu_2=2\mu_1$.}
\label{fig:quadratic_bound_tandem2d_load_num_first_queue}
\end{figure}
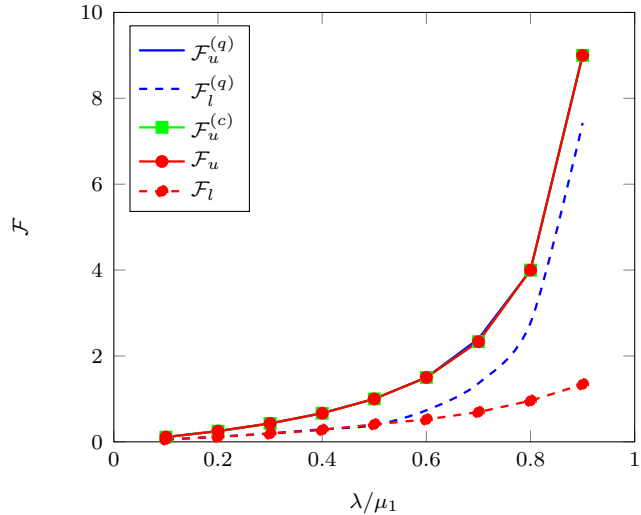

From Figure~\ref{fig:quadratic_bound_tandem2d_load_num_first_queue}, we see that the upper bounds are always $\bar{\FF}$. For the lower bounds, when the $\lambda/\mu_1 \le 0.6$, considering quadratic bounds or linear bounds on $D^t_u(n)$ does not affect much to the lower bound. However, when $\lambda/\mu_1 > 0.6$, by considering quadratic bounds on the bias terms, we can significantly improve the lower bound on $\FF$. 

Next, fix $\lambda/\mu_1 = 0.8$, $\mu_2 = 2\mu_1$ and let $\mu_1^* = \eta\cdot\mu_1$. In Figure~\ref{fig:quadratic_bound_tandem2d_mu1_num_first_queue}, we show the upper and lower bounds on $\FF$ for various $\eta$. 

\begin{figure}[htb!]
\centering
\begin{tikzpicture}[scale=1]
\begin{axis}[
xlabel=$\eta$,ylabel=$\FF$, 
xmin = 0.8, xmax = 2,
ymin = 0.5, ymax = 8,
font=\scriptsize,
legend style={
	cells={anchor=west},
	legend pos=north east,
	font=\scriptsize,
}
]

\addplot[
mark=none,line width=.3mm, color=blue,
]
table[
header=false,x index=0,y index=2,
]
{matlab/Quadratic_bound/tandem2d_mu1_num_first_queue.csv};
\addlegendentry{$\FF^{(q)}_u$};

\addplot[
mark=none,line width=.3mm, color=blue, dashed
]
table[
header=false,x index=0,y index=1,
]
{matlab/Quadratic_bound/tandem2d_mu1_num_first_queue.csv};
\addlegendentry{$\FF^{(q)}_l$};

\addplot[
mark=square*,line width=.3mm, color=green,
]
table[
header=false,x index=0,y index=1, 
]
{matlab/Quadratic_bound/tandem2d_mu1_num_first_queue_upper.csv};
\addlegendentry{$\FF^{(c)}_u$};

\addplot[
mark=square*,line width=.3mm, color=green, dashed
]
table[
header=false,x index=0,y index=1, 
]
{matlab/Quadratic_bound/tandem2d_mu1_num_first_queue_lower.csv};
\addlegendentry{$\FF^{(c)}_l$};

\addplot[
mark=*,line width=.3mm, color=red,
]
table[
header=false,x index=0,y index=4,
]
{matlab/Quadratic_bound/tandem2d_mu1_num_first_queue.csv};
\addlegendentry{$\FF_u$};

\addplot[
mark=*,line width=.3mm, color=red, dashed
]
table[
header=false,x index=0,y index=3,
]
{matlab/Quadratic_bound/tandem2d_mu1_num_first_queue.csv};
\addlegendentry{$\FF_l$};
\end{axis}     
\end{tikzpicture}
\caption{Bounds on $\FF$ for various $\eta$: $F(n) = n_1$, $\lambda/\mu_1 = 0.8$, $\mu_2 = 2\mu_1$.}
\label{fig:quadratic_bound_tandem2d_mu1_num_first_queue}
\end{figure}
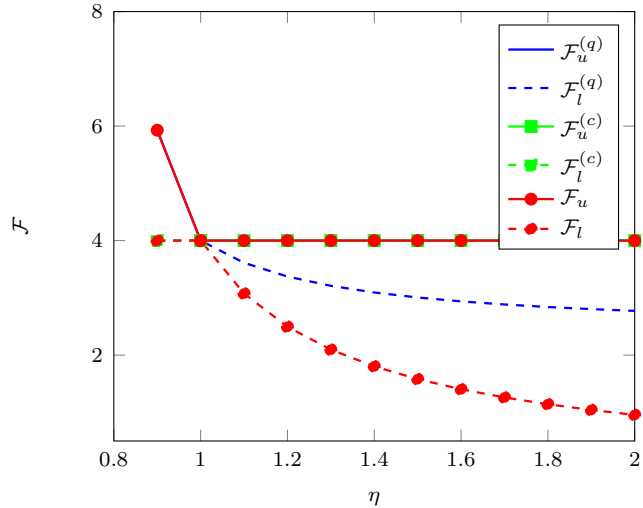

We see that the smaller perturbation we make, the tighter upper and lower bounds we obtain. When $\eta > 1$, all the upper bounds are consistent. Since $\lambda/\mu_1 > 0.6$, we observe that when $\eta \ge 1$, the lower bound gets tighter by considering quadratic bounds on the bias terms.

\subsection{Three-node tandem system with boundary speed-up} 

Next, we consider the three-node tandem system that has been considered in~\cite{bai2018linear} and we see that by considering quadratic bounds on the bias terms, bounds on $\FF$ can be obtained for more cases. The three-node tandem system does not have negative drift. Thus, we show that the numerical program is also applicable when $R$ does not have negative drift. 

Consider the tandem system where every job arrives at Node 1 and goes through all the nodes to complete its service. In the end, a job leaves the system through Node 3. Assume that the arrival rate is $\lambda$. Moreover, we assume that each server follows the First-In-First-Out discipline and has the service rates $\mu$ when there are jobs in the queues. 
For Server 1, the service rate changes to $\mu^*$ if both Queue 2 and Queue 3 become empty. 

\subsubsection*{The original random walk}

In this example, we have $S = \left\{ 0,1,\dots \right\}^3$. Notice that the tandem system described above is a continuous-time system. Therefore, we use the uniformization method to transform the continuous-time tandem system into a discrete-time $R$. Without loss of generality, assume that $\lambda + \max\{\mu,\mu^*\} + 2\mu \le 1$. Hence, we take the uniformization constant $1$. Then, the transition probabilities of the discrete-time $R$ are given below. 
\begin{align}
P(n,n+e_1) =&\ \lambda, \qquad P(n,n+d_2) = \mathbf{1}\left( n+d_2\in S \right)\mu, \\
P(n,n-e_3) =&\ \mathbf{1}\left( n-e_3\in S \right)\mu, \\
P(n,n+d_1) =& 
\begin{cases}
\mu^*, &\ \textrm{if } n_2=n_3=0, \\
\mu, &\ \textrm{otherwise}, 
\end{cases} \\
P(n,n) =&\ 1-\sum_{u\in  \left\{ e_1,d_1,d_2,d_3 \right\}} P(n,n+u), 
\end{align}
for all $n\in S$, with $e_1 = (1,0,0)$, $d_1 = (-1,1,0)$, $d_2 = (0,-1,1)$ and $e_3 = (0,0,1)$. 

\subsubsection*{The perturbed random walk}

For the perturbed random walks $\bar{R}$, we take
\begin{align}
\bar{P}(n,n+e_1) =&\ \lambda, \qquad \bar{P}(n,n+d_2) = \mathbf{1}\left( n+d_2\in S \right)\mu, \\
\bar{P}(n,n-e_3) =&\ \mathbf{1}\left( n-e_3\in S \right)\mu, \qquad \bar{P}(n,n+d_1) = \mathbf{1}\left( n+d_1\in S \right)\mu, \\
\bar{P}(n,n) = &\ 1-\sum_{u\in  \left\{ e_1,d_1,d_2,d_3 \right\}} \bar{P}(n,n+u). 
\end{align}
We know from~\cite{jackson1957networks} that the stationary distribution of $\bar{R}$ is,
\begin{align}
\bar{\pi}(n) = (1-\rho)^3 \cdot \rho^{n_1+n_2+n_3},
\end{align}
where $\rho = \lambda/\mu$. 

Let $F(n) = n_1$ and let $\mu^* = 1.5\mu$. We consider various values for $\lambda/\mu$. In Figure~\ref{fig:quadratic_bound_tandem3d_num_first_queue_load}, we plot the upper and lower bounds for $\FF$. The bounds obtained by considering quadratic bounds on the bias terms are denoted by $\FF^{(q)}_u$ and $\FF^{(q)}_l$. We also include the upper and lower bounds obtained in~\cite{bai2018linear}, which are denoted by $\FF_u$ and $\FF_l$ respectively. 

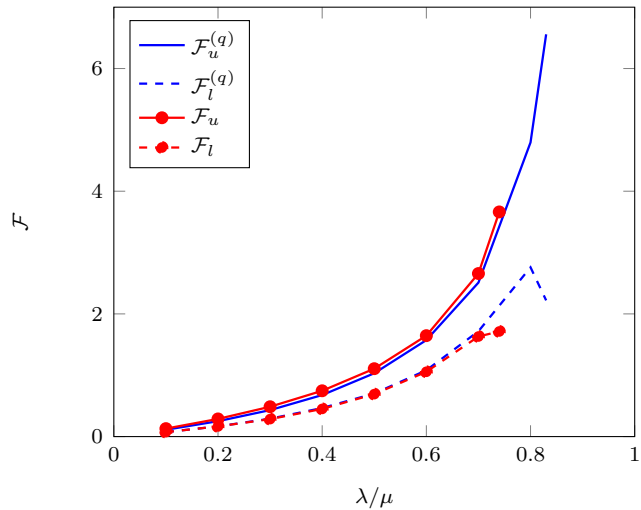
\begin{figure}[!htb]
\centering
\begin{tikzpicture}[scale=1]
\begin{axis}[
xlabel=$\lambda/\mu$,ylabel=$\FF$, 
xmin = 0, xmax = 1,
ymin = 0, ymax = 7,
font=\scriptsize,
legend style={
	cells={anchor=west},
	legend pos=north west,
	font=\scriptsize,
}
]

\addplot[
mark=none,line width=.3mm, color=blue,
]
table[
header=false,x index=0,y index=2,
]
{matlab/Quadratic_bound/tandem3d_num_first_queue_load.csv};
\addlegendentry{$\FF^{(q)}_u$};

\addplot[
mark=none,line width=.3mm, color=blue, dashed
]
table[
header=false,x index=0,y index=1,
]
{matlab/Quadratic_bound/tandem3d_num_first_queue_load.csv};
\addlegendentry{$\FF^{(q)}_l$};

\addplot[
mark=*,line width=.3mm, color=red,
]
table[
header=false,x index=0, y index=2,
]
{matlab/Quadratic_bound/tandem3d_size1_load_1.5_new.csv};
\addlegendentry{$\FF_u$};

\addplot[
mark=*,line width=.3mm, color=red, dashed
]
table[
header=false,x index=0, y index=1,
]
{matlab/Quadratic_bound/tandem3d_size1_load_1.5_new.csv};
\addlegendentry{$\FF_l$};

%
\end{axis}     
\end{tikzpicture}
\caption{Bounds on $\FF$ for various $\lambda/\mu$: $F(n) = n_1$, $\mu_1^* = 1.5\mu$.}
\label{fig:quadratic_bound_tandem3d_num_first_queue_load}
\end{figure}

In Figure~\ref{fig:quadratic_bound_tandem3d_num_first_queue_load}, we see that $\FF^{(q)}_u$ and $\FF^{(q)}_l$ are slightly better than $\FF_u$ and $\FF_l$. Moreover, by allowing quadratic bounds on the bias terms, results can be obtained for cases where the linear problem in~\cite{bai2018linear} is infeasible. However, for random walks with very heavy load, the linear program based on quadratic bounds on the bias terms becomes infeasible as well.

\section{Conclusions}
\label{sec:conclusion}

In this paper, we have discussed bounds on the bias terms. Moreover, we have shown that if function $F$ is $C$-linear and the random walk has negative drift, a geometric bounding function as well as a quadratic bounding function can be found. The geometric bounding function has an explicit expression, but it often provides bounds that are far from tight. The quadratic bounding function is relatively tight. Nevertheless, we are not able to obtain a closed-form expression in general. 

We have formulated a linear program to find bounds on the stationary performance based on quadratic bounds on the bias terms. Indeed, the linear program can provide quite tight bounds on the stationary performance, even when the random walk does not have negative drift. However, the linear problem is only feasible in some cases. Therefore, one direction for future research is to explore techniques that enable us to apply the linear program to more general cases. 

We see from numerical results that when the load of the system is heavy and the linear program is feasible, the bounds on $\FF$ obtained based on quadratic bounds can be tighter than those based on linear bounds. Thus, it is promising that by considering polynomial bounds of higher order, we can improve the bounds on $\FF$. It will be of interest to develop implementation techniques for obtaining performance bounds based on higher-order polynomial bounds on the bias terms.

\bibliographystyle{plain}

\bibliography{references}

\end{document}